\documentclass[12pt, a4paper]{amsart}
\usepackage{amsmath}
\usepackage{amsthm,graphics,tabularx,amssymb,shapepar}
\usepackage{mathtools}
\usepackage{stmaryrd}
\usepackage{fullpage}
\usepackage{enumerate,enumitem}
\usepackage{caption}
\usepackage{tikz-cd}
\usepackage{amscd}
\usepackage[cmyk]{xcolor}
\usepackage{comment}
\excludecomment{review}

\usepackage{hyperref}
\usepackage[
backend=biber,
style=alphabetic,
sorting=nyt,
maxnames=9,
minnames=1,
maxalphanames=99,
]{biblatex}
\makeatletter
\newcommand*{\rom}[1]{\expandafter\@slowromancap\romannumeral #1@}
\makeatother

\newcommand{\BC}{{\mathbb {C}}}

\newcommand{\BR}{{\mathbb {R}}}

\newcommand{\BZ}{{\mathbb {Z}}}

\newcommand{\CH}{{\mathcal {H}}}

\newcommand{\CN}{{\mathcal {N}}}

\newcommand{\CP}{{\mathcal {P}}}

\newcommand{\CR}{{\mathcal {R}}}

\newcommand{\CY}{{\mathcal {Y}}}
\newcommand{\CZ}{{\mathcal {Z}}}

\newcommand{\RO}{{\mathrm {O}}}

\newcommand{\RU}{{\mathrm {U}}}

\newcommand{\End}{{\mathrm{End}}}

\newcommand{\Hom}{{\mathrm{Hom}}}

\newcommand{\Ind}{{\mathrm{Ind}}}

\newcommand{\Sp}{{\mathrm{Sp}}}

\newcommand{\Span}{{\mathrm{Span}}}

\newcommand{\degree}{\operatorname{deg}}

\newcommand{\sgn}{\operatorname{sgn}}

\newcommand{\LD}{\operatorname{LD}}

\newcommand{\g}{\mathfrak g}

\renewcommand{\u}{\mathfrak u}

\newcommand{\m}{\mathfrak m}

\renewcommand{\sp}{\mathfrak s \mathfrak p}

\newcommand{\be}{\begin {equation}}
\newcommand{\ee}{\end {equation}}
\newcommand{\bee}{\begin {equation*}}
\newcommand{\eee}{\end {equation*}}

\newcommand{\JH}{{Jordan--H\"{o}lder }}

\DeclareMathOperator{\otimeshat}{\widehat{\otimes}}

\newcommand{\wtd}[1]{\widetilde{#1}}

\newcommand{\tG}{\widetilde G}
\newcommand{\tGp}{\widetilde G'}
\newcommand{\tK}{\widetilde K}
\newcommand{\tKp}{\widetilde K'}
\newcommand{\tU}{\widetilde U}

\newtheorem{thm}{Theorem}[section]
\newtheorem{cort}[thm]{Corollary}
\newtheorem{lemt}[thm]{Lemma}

\newtheorem{dfnt}[thm]{Definition}

\theoremstyle{remark}

\theoremstyle{remark}
\newtheorem*{rremark}{Remark}

\begin{document}
\title{Multiplicity One Property for Archimedean Theta Correspondence}

\author[Z. Geng]{Zhibin Geng}
\address{Department of Mathematics and New Cornerstone Science Laboratory, The University of Hong Kong}
\email{gengzb@hku.hk}

\author[K. Wu]{Kaidi Wu}
\address{Department of Mathematics and New Cornerstone Science Laboratory, The University of Hong Kong}
\email{kaidiwu24@connect.hku.hk}

\date{\today}

\keywords{Theta lifting, \JH multiplicity, lowest degree $K$-type}
\subjclass[2020]{11F27, 22E50}

\begin{abstract}
    For any archimedean reductive dual pair, we prove that the \JH multiplicity of the small theta lift in the full theta lift is one.
    The main input is an equality between the multiplicities of the lowest degree $K$-types in the full and small theta lifts.
    We also deduce a new description of the archimedean theta correspondence in terms of lowest degree $K$-types.
\end{abstract}

\maketitle
\tableofcontents

\section{Introduction}
Let $\Sp_{2n}(\BR) = \Sp_{2n} = \Sp$ be the real symplectic group of rank $n$, and let $(G, G')$ be a reductive dual pair in $\Sp$.
Fix a maximal compact subgroup $U\simeq U(n)$ of $\Sp$
such that
\[
    K:=G\cap U,\qquad K':=G'\cap U
\]
are maximal compact subgroups of $G$ and $G'$, respectively.
We use lowercase Gothic letters to denote the complexified
Lie algebras of the corresponding groups, e.g. $\sp, \u, \g, \g'$.
Let $\wtd{\Sp}$ be the metaplectic double cover of $\Sp$. For any subgroup $E\subseteq\Sp$, denote by $\wtd{E}$ the inverse image of $E$ in $\wtd{\Sp}$.
Note that $\tG$ commutes with $\tGp$. We use $\tG\cdot\tGp$ to denote their product subgroup in $\wtd{\Sp}$, and use $\tK\cdot\tKp$ similarly.

Fix a nontrivial unitary character $\psi$ of $\BR$, and let
$\omega^\infty$ be the associated smooth Weil representation
of $\wtd{\Sp}$.
Let $\omega$ be its $(\sp,\tU)$-module of $\tU$-finite vectors.
We realize $\omega$ in the Fock model
\[
    \CP=\BC[z_1,\ldots,z_n].
\]
By restriction, $\CP$ is a
$(\g\oplus\g',\tK\cdot\tKp)$-module.

Denote by $\CR(\g,\tK,\omega)$ the set of isomorphism classes of irreducible $(\g,\tK)$-modules $\pi$ such that
\[
    \Hom_{\g,\tK}(\CP,\pi)\neq 0.
\]
Define $\CR(\g',\tKp,\omega)$ similarly.
For $\pi \in \CR(\g,\tK,\omega)$, consider 
\begin{equation}\label{Eq: N pi}
    \CN_{\pi} := \bigcap_{\phi \in \Hom_{\g, \tK}(\CP, \pi)} \ker \phi,
\end{equation}
which is stable under $(\g \oplus \g', \tK \cdot \tKp)$-action.
The following celebrated theorem is due to Howe.

\begin{thm}[{\cite[Theorem 2.1]{Howe1989TranscendingClassicalInvTheory}}]
  For every $\pi\in\CR(\g,\tK,\omega)$, there is an isomorphism
  \[
    \CP/\CN_\pi\simeq\pi\otimes\Theta(\pi)
  \]
  of $(\g\oplus\g',\tK\cdot\tKp)$-modules, where $\Theta(\pi)$ is a $(\g',\tKp)$-module of finite length.
  Moreover, $\Theta(\pi)$ has a unique irreducible quotient $\theta(\pi)$, and the correspondence $\pi\mapsto\theta(\pi)$ defines a bijection from $\CR(\g,\tK,\omega)$ to $\CR(\g',\tKp,\omega)$.
\end{thm}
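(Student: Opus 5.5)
This is Howe's theorem; I will sketch the proof along Howe's original lines, with the Fock model, $\tU$-types and the degree filtration as the main tools.

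\textbf{Step 1: the tensor-product structure.} Let $\mathcal{H}$ be the space of $\tK$-harmonics: the joint kernel in $\CP$ of the degree-lowering operators in $\mathfrak{p}'^{-}$, i.e.\ the $(1,-1)$-part of $\g'$. One has
\[
\CP = \mathcal{U}(\g')\cdot\mathcal{H} = \mathcal{U}(\mathfrak{p}'^{+})\cdot\mathcal{H}.
\]
By classical invariant theory (Kashiwara--Vergne for the compact pairs), $\mathcal{H}$ decomposes under $\tK\cdot\tKp$, or rather under $\tK\times M$ for the compact dual partner $M$ of $K$, multiplicity-freely. This yields the bijection between $\tK$-types and $\tKp$-types of minimal degree.

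Given $\pi\in\CR(\g,\tK,\omega)$, the quotient $\CP/\CN_\pi$ is $\pi$-isotypic as a $\g$-module. Since $\pi$ is irreducible and admissible, a Schur/Jacobson density argument shows that
\[
\Theta(\pi) := \Hom_{\g,\tK}(\CP,\pi)^{*}
\]
(suitably $\tKp$-finite) satisfies $\CP/\CN_\pi\simeq\pi\otimes\Theta(\pi)$. Finite length of $\Theta(\pi)$ follows because $\CP$ is finitely generated over $\mathcal{U}(\g')$, via the degree filtration and the finite-dimensionality of each $\tK$-isotypic piece of the harmonics. Admissibility comes from the fact that each $\tKp$-type occurs in finitely many degrees. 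Together with the centre acting by a finite set of characters (Howe duality of centres, via the Capelli identity), this gives finite length.

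\textbf{Step 2: unique irreducible quotient.} Let $\sigma$ be a $\tK$-type of $\pi$ of lowest degree in $\mathcal{H}$, and $\sigma'$ the corresponding $\tKp$-type. Then $\sigma'$ occurs in $\Theta(\pi)$ with multiplicity one, and it generates $\Theta(\pi)$, since $\CP$ is generated over $\mathcal{U}(\g')$ by the harmonic $\sigma\otimes\sigma'$-piece. Hence the sum of all proper submodules of $\Theta(\pi)$ misses $\sigma'$ and is itself proper. This gives a unique maximal submodule, and so a unique irreducible quotient $\theta(\pi)$.

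\textbf{Step 3: the bijection, and the main obstacle.} The map $\pi\mapsto\theta(\pi)$ lands in $\CR(\g',\tKp,\omega)$. By symmetry we obtain $\pi'\mapsto\theta'(\pi')$, and we must show these are mutually inverse, i.e.\ that the graph
\[
\{(\pi,\pi') : \Hom(\CP,\pi\otimes\pi')\neq0\}
\]
is a bijection. The hard part is showing that if $\pi\otimes\pi_1'$ and $\pi\otimes\pi_2'$ are both quotients of $\CP$, then $\pi_1'\simeq\pi_2'$. Equivalently, every irreducible quotient of $\Theta(\pi)$ is the same.

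Step 2 only handles quotients through which $\sigma'$ survives, so it must be extended to quotients where the lowest-degree type dies. Howe's argument passes to the joint kernel structure: consider
\[
\CP/\CN_\pi\otimes_{\g}\cdots,
\]
i.e.\ work with the associated graded of the degree filtration over $\mathcal{U}(\mathfrak{p}^{+})\otimes\mathcal{U}(\mathfrak{p}'^{+})$. One then shows that $\pi\otimes\pi'$ being a quotient forces the lowest $\tK$-type degrees of $\pi$ and $\pi'$ to match through $\mathcal{H}$. I would try this first. I expect this step, which controls quotients not detected by the harmonic correspondence, to be the main obstacle.
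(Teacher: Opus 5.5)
\textbf{Step 2 is wrong, and Step 3 is not done.} Your claim that $\sigma'=\theta_{\CH}(\sigma)$ occurs in $\Theta(\pi)$ with multiplicity one is false in general. By Theorem~\ref{Thm: LDK multiplicity eq} its multiplicity equals $\dim\Hom_{\tK}(\sigma,\pi)$, and Section~\ref{Sec: mult two example} exhibits a lowest degree $\tK$-type of multiplicity two; you give no way of choosing $\sigma$ to avoid this. The generation claim is also false as stated. Since $\g'$ commutes with $\tK$, $\mathcal U(\g')\cdot\CH_{\sigma,\sigma'}$ stays inside the $\sigma$-isotypic part of $\CP$ for $\tK$. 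Without multiplicity one, ``the sum of all proper submodules misses $\sigma'$'' does not follow, because different proper submodules may each meet the $\sigma'$-isotypic component.

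Step 3 is also inconsistent with Step 2: if Step 2 held, every irreducible quotient would contain $\sigma'$ and your ``hard part'' would already be settled. Step 3 itself is only a plan, so the uniqueness of the quotient and the bijection, which are the substance of the theorem, are not proved.

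The finiteness arguments in Step 1 also fail as stated. $\CP$ is in general an infinite direct sum of $\g'$-stable $\tK$-isotypic pieces, hence not finitely generated over $\mathcal U(\g')$. When $G$ is noncompact, every $\tKp$-type of $\CP$ occurs in infinitely many degrees, so that observation cannot give admissibility of $\Theta(\pi)$.

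\textbf{What is missing.} The paper quotes the theorem from Howe, and the two ingredients of Howe's proof recorded in Section~\ref{Sec: preliminary} are exactly what you need.

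(i) Lemma~\ref{Lem: Howe quotient degree}: for every proper submodule $\CN\subsetneq\CP$, the minimal degrees of $\tK$-types and of $\tKp$-types of $\CP/\CN$ both equal $\delta(\CN)+1$, and a $\tK$-type of that degree forces its $\theta_{\CH}$-partner to occur. Applied to $\CP\twoheadrightarrow\pi\otimes\pi'$, this shows $\sigma'$ occurs in every such $\pi'$. So quotients ``where the lowest-degree type dies'' do not exist.

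(ii) The simultaneous cyclicity $M_{\CZ}=\mathcal U(\g)^K e_0=\mathcal U(\g')^{K'}e_0$ of Lemma~\ref{Lem: Z existence}. It descends to every quotient $\pi\otimes V$ of $\CP/\CN_\pi$ exactly as in Lemma~\ref{Lem: theta cyclic}. Write $M_V=E\otimes F$ with $E=\Hom_{\tK}(\sigma,\pi)$ and $F=\Hom_{\tKp}(\sigma',V)$, and let $B$ be the image of $\mathcal U(\g')^{K'}$ in $\End(F)$. The proof of Lemma~\ref{Lem: rank eq} shows $T_{e_V}\colon F^*\to E$ is injective. Hence $b\mapsto(1\otimes b)e_V$ is injective on $B$, and by cyclicity it maps onto $E\otimes F$. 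Since $\dim E=\dim F$, this forces $B=\End(F)$, i.e.\ $F$ is an irreducible $\mathcal U(\g')^{K'}$-module. This irreducibility is the correct replacement for ``multiplicity one.''

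Now suppose $\Theta(\pi)$ had two distinct maximal submodules $W_1,W_2$. Take $V=\Theta(\pi)/(W_1\cap W_2)\simeq\pi_1'\oplus\pi_2'$. Then $F=\Hom_{\tKp}(\sigma',\pi_1')\oplus\Hom_{\tKp}(\sigma',\pi_2')$ splits into two $\mathcal U(\g')^{K'}$-stable summands, both nonzero by (i), which is a contradiction. Note that (i) alone would not suffice when $\sigma'$ has multiplicity greater than one.

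The bijection is then formal. Any quotient $\pi\otimes\pi'$ of $\CP$ factors through $\CP/\CN_\pi\simeq\pi\otimes\Theta(\pi)$, so $\pi'\simeq\theta(\pi)$, and symmetrically on the other side. Establishing (i) and (ii), via the filtration by $\CY^{(d)}$ and Howe's module $\CZ$, is the real work, and the proposal does not supply it.
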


We call $\Theta(\pi)$ and $\theta(\pi)$ the full and small
theta lifts of $\pi$, respectively. 
Although small theta lifts have been explicitly described in many cases, the structure of full theta lifts remains less well understood. A basic question is to determine the \JH multiplicity of $\theta(\pi)$ in $\Theta(\pi)$, i.e., the multiplicity of $\theta(\pi)$ occurs in the \JH factors of $\Theta(\pi)$, which we denote by $[\Theta(\pi):\theta(\pi)]$. 
Our main result is the following.

\begin{thm}\label{Thm: intro-JH}
  For any $\pi$ in $\CR(\g,\tK,\omega)$, we have 
  \[
  [\Theta(\pi) : \theta(\pi)] = 1.
  \]
\end{thm}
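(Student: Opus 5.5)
The plan is to reduce the Jordan--H\"older multiplicity to a multiplicity count of one well-chosen $\tKp$-type, following the abstract's hint about lowest degree $K$-types. The Fock model $\CP=\BC[z_1,\ldots,z_n]$ is graded by polynomial degree, and the action of $\tK\cdot\tKp$ preserves this grading. For a $\tKp$-type $\sigma'$, its \emph{degree} is the smallest degree of a homogeneous polynomial in $\CP$ on which $\tKp$ acts through $\sigma'$. For a $(\g',\tKp)$-module $\pi'$ occurring in $\omega$, a \emph{lowest degree $\tKp$-type} is a $\tKp$-type of $\pi'$ whose degree is minimal among all $\tKp$-types of $\pi'$. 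From Howe's theory I take the following as known:
\begin{itemize}[label={}]
\item (a) $\theta(\pi)$ has a lowest degree $\tKp$-type $\sigma'$ with $\sigma'$ of multiplicity one in $\theta(\pi)$;
\item (b) the joint harmonics $\CH$ give a bijection $\sigma\leftrightarrow\sigma'$ between the $\tK$-types and $\tKp$-types occurring in $\CH$;
\item (c) the lowest degree $\tK$-type $\sigma$ of $\pi$ corresponds to $\sigma'$ under (b).
\end{itemize}

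The key step is to show that the multiplicity of $\sigma'$ in $\Theta(\pi)$ equals its multiplicity in $\theta(\pi)$, namely $1$. For this I would use the isomorphism $\CP/\CN_\pi\simeq\pi\otimes\Theta(\pi)$. Taking the $\sigma$-isotypic part for $\tK$, the multiplicity of $\sigma'$ in $\Theta(\pi)$ is the multiplicity of $\sigma\otimes\sigma'$ in $\CP/\CN_\pi$, provided $\sigma$ has multiplicity one in $\pi$. That proviso is also a known property of lowest degree types. Next, the $\sigma\otimes\sigma'$-isotypic component of $\CP$ in degree $\deg\sigma'$ lies in $\CH$ and is a single copy of $\sigma\otimes\sigma'$. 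So it suffices to show that the image of this copy generates everything of type $\sigma'$ in $\Theta(\pi)$. In other words, no vectors of type $\sigma\otimes\sigma'$ in higher degree survive in $\CP/\CN_\pi$ beyond what $U(\g)\cdot U(\g')$ produces from the harmonic copy. I would argue this using the decomposition $\CP=U(\g'')\CH$, where $\g''$ is the part of $\g$ or $\g'$ of degree $+2$ (the $\mathfrak p^+$ part), together with a degree-filtration argument. Suppose $\phi\in\Hom(\CP,\pi)$ and consider a higher-degree vector of type $\sigma\otimes\sigma'$. After being written in terms of raising operators from $\mathfrak p'^+$ applied to lower-degree harmonics, its image must reduce, because of the minimality of $\deg\sigma'$ in $\Theta(\pi)$ and of $\sigma$ in $\pi$.

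With the multiplicity statement in hand, the main theorem follows formally. Since $\theta(\pi)$ is a quotient of $\Theta(\pi)$, the type $\sigma'$ occurs at least once in $\Theta(\pi)$, and by the key step exactly once. Every Jordan--H\"older factor of $\Theta(\pi)$ that is isomorphic to $\theta(\pi)$ contains $\sigma'$. Because multiplicities of $\tKp$-types are additive over a composition series (as $\tKp$-modules, semisimplicity holds), we get $[\Theta(\pi):\theta(\pi)]\le 1$. Equality holds because $\theta(\pi)$ is a quotient of $\Theta(\pi)$.

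I expect the main obstacle to be the key step: showing that the full lift has no extra copies of $\sigma'$ sitting in higher polynomial degree. The first thing I would try is to exploit the $\mathfrak{u}$-degree operator, which is central in $\mathfrak k\oplus\mathfrak k'$ up to a shift. This would let me compare the eigenvalues of the Casimir of $\g$ on $\pi$ with the $\tKp$-types, in order to rule out such copies. If that fails, I would reduce to the case of an irreducible pair (types I and II) and do a case analysis of the standard filtration of $\Theta(\pi)$ by $\tKp$-degree.
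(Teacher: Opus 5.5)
There is a genuine gap, in two places: the inputs you take as known, and the key step. Your overall structure does match the paper: reduce to comparing the multiplicity of one lowest degree $\tKp$-type in $\Theta(\pi)$ and $\theta(\pi)$, then count over a composition series.

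\textbf{The assumed inputs.} Statement (a), and your proviso that the lowest degree $\tK$-type $\sigma$ has multiplicity one in $\pi$, are not results of Howe's theory. The proviso is false in general. In Section~\ref{Sec: mult two example}, for the pair $(\Sp_4(\BR),\RO(m,m))$, the irreducible principal series $\pi=\Ind_B^G(\chi_1\boxtimes\chi_2)$ has five lowest degree $K$-types, so ``the'' lowest degree type in (c) is not well defined. One of them, $(1,-1)$, occurs with multiplicity two, and its partner $\theta_{\CH}((1,-1))$ then occurs with multiplicity two in $\theta(\pi)$ as well. Howe's theory also gives no guarantee that some lowest degree type of multiplicity one exists. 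So ``multiplicity one'' is the wrong target.

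\textbf{The key step.} As sketched, it does not work. The type $\sigma\otimes\sigma'$ generally recurs in higher degrees of $\CP$. Minimality of degree alone gives no handle on which of those copies survive in $\CP/\CN_\pi$, since that depends on $\pi$. Your fallback ideas (Casimir eigenvalues, a case-by-case degree filtration) do not control $\tKp$-multiplicities either.

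\textbf{The missing idea.} What is missing is Howe's simultaneous cyclicity; with it, the goal becomes equality of multiplicities rather than multiplicity one. For $V=\Theta(\pi)$ or $\theta(\pi)$, set
\[
M_V=\Hom_{\tK\cdot\tKp}(\CH_{\sigma,\sigma'},\pi\otimes V)\simeq\Hom_{\tK}(\sigma,\pi)\otimes\Hom_{\tKp}(\sigma',V).
\]
Howe's auxiliary module $\CZ$ satisfies $M_{\CZ}=U(\g)^K e_0=U(\g')^{K'}e_0$. Pushing forward along $\CZ\twoheadrightarrow\pi\otimes\Theta(\pi)\twoheadrightarrow\pi\otimes\theta(\pi)$ gives $M_V=U(\g)^K e_V=U(\g')^{K'}e_V$ with $e_V\neq 0$. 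Here $U(\g)^K$ acts only on the first tensor factor and $U(\g')^{K'}$ only on the second.

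An elementary lemma then finishes: if $E\otimes F=(A\otimes 1)x=(1\otimes B)x$, then $\dim E=\dim F$. This follows because $F^*\to E$, $\lambda\mapsto(1\otimes\lambda)x$, is injective, and symmetrically. Hence
\[
\dim\Hom_{\tKp}(\sigma',V)=\dim\Hom_{\tK}(\sigma,\pi)=:m
\]
for both choices of $V$. If $m=1$, this is exactly what your key step wanted, namely $M_V=\BC e_V$, but no such assumption is needed.

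Your final counting argument then goes through with $m$ in place of $1$. If $t=[\Theta(\pi):\theta(\pi)]$, then
\[
m=\dim\Hom_{\tKp}(\sigma',\Theta(\pi))\geq t\,m,
\]
which forces $t=1$.
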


This theorem is useful for studying the structure of the full theta lift $\Theta(\pi)$ and proving certain irreducibility results. We will present applications in this direction in a forthcoming paper.

Theorem~\ref{Thm: intro-JH} also admits an analogue for smooth theta lifts, which can be deduced from this algebraic result. We will discuss this in Subsection~\ref{Subsec: smooth vertion}.

The main input of the proof of Theorem~\ref{Thm: intro-JH} is indeed an analysis of lowest degree $\tK$-types. We have the following equality between the multiplicities of the lowest degree $\tK$-types in the full and small theta lifts.

\begin{thm}\label{Thm: intro-LDK}
  For any $\pi$ in $\CR(\g,\tK,\omega)$, 
  \begin{enumerate}
      \item The sets of lowest degree $\tKp$-types in $\Theta(\pi)$ and $\theta(\pi)$ are the same.
      \item For any lowest degree $\tKp$-type $\tau$ in $\Theta(\pi)$ (thus also of lowest degree in $\theta(\pi)$), we have
        \[
        \dim \Hom_{\tKp}(\tau,\Theta(\pi))
        = \dim \Hom_{\tKp}(\tau,\theta(\pi)).
        \]
  \end{enumerate}
\end{thm}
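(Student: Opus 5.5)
Throughout, the degree is the one furnished by the Fock model. The operator $E=\sum z_i\partial_{z_i}$ grades $\CP$, and the Lie algebra $\u'$ of $\tU'=\tKp$-commutant (the centralizer of $K'$ in $U$) acts on $\CP$ by degree-preserving operators. Howe's theory gives the joint harmonics $\CH=\CH(K)\cap\CH(K')$ and the bijection between $\tK$-types and $\tKp$-types occurring in $\CH$. The plan is to reduce both statements to a comparison of $\tKp$-isotypic pieces sitting in the lowest possible degree.

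\textbf{Step 1: lowest degree $\tKp$-types of $\Theta(\pi)$ and their multiplicities.} For each $\tKp$-type $\tau$, its degree is the minimal degree of a copy of $\tau$ in $\CP$, and these copies are exactly the $K'$-harmonics $\CH(K')$. The decomposition $\CH(K')\simeq\bigoplus \sigma\otimes\tau$ is taken as a $(\g^{(1,1)}\text{-ish})\times\tKp$ structure, with $\sigma$ running over the modules of the compact dual $\tK$-side Lie algebra. We describe $\CP/\CN_\pi$ restricted to the $\tau$-isotypic part. Since $\CP(\tau)=\mathcal U(\g'^{\,(2,0)})\cdot\CH(K')(\tau)$ and raising operators increase degree, the lowest degree $\tKp$-types of $\Theta(\pi)$ are those $\tau$ of minimal degree with $\Hom_{\g,\tK}(\CP,\pi)$ nonzero on the $\tau$-isotypic part. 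For such $\tau$, the space $\Hom_{\tKp}(\tau,\Theta(\pi))$ is governed by the image of $\CH(K')(\tau)$ in $\pi\otimes\Theta(\pi)$. This image is the quotient of the $\g$-module generated by $\CH(K')(\tau)$, which is a generalized Verma-type (degenerate principal series-type) module $\Ind(\sigma_\tau)$ of $\g$. Hence we get
\[
\dim\Hom_{\tKp}(\tau,\Theta(\pi))=\dim\Hom_{\g,\tK}(\mathcal{M}_\tau,\pi),
\]
where $\mathcal M_\tau$ is the $(\g,\tK)$-module generated by the $\tau$-isotypic harmonics.

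\textbf{Step 2: pass to $\theta(\pi)$.} Let $\Theta(\pi)\to\theta(\pi)$ be the quotient, with kernel $\Theta^0$. It suffices to show that $\Theta^0$ contains no $\tKp$-type of degree at most the lowest degree $d$ of $\Theta(\pi)$; both (1) and (2) then follow. Suppose instead that $\Theta^0$ contains $\tau$ of degree $d$. Then the $(\g',\tKp)$-submodule generated by this $\tau$-isotypic vector lies in $\Theta^0$, which is proper, and this should contradict the uniqueness of the irreducible quotient. The argument runs by symmetry: $\theta(\pi)$ lies in $\CR(\g',\tKp,\omega)$ with $\theta(\theta(\pi))=\pi$. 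Using the joint harmonic bijection, lowest degree types of $\theta(\pi)$ correspond to $\tK$-types of $\pi$ of lowest degree. One then checks that the vectors in $\Theta(\pi)$ of degree $d$ generate a submodule whose image under every nonzero map to $\pi\otimes(\cdot)$ is nonzero. This amounts to showing that the degree-$d$ part of $\CP/\CN_\pi$ is spanned by joint harmonics paired with lowest degree $\tK$-types of $\pi$.

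\textbf{Main obstacle.} The crux is Step 2. We must show that every vector of minimal degree in $\CP/\CN_\pi$ survives in $\pi\otimes\theta(\pi)$, i.e.\ that degree-$d$ vectors cannot sit in the radical $\pi\otimes\Theta^0$. To do this, I would first use that, in degree $d$, the lowering operators $\g'^{(0,2)}$ kill these vectors, so they are $\g'$-lowest weight-type vectors. Next I would prove that, in degree $d$, the image of $\CH$ equals the whole degree-$d$ piece of $\CP/\CN_\pi$; this is Howe's argument in ``Transcending'' that $\CP=\mathcal U(\g^{(2,0)})\mathcal U(\g'^{(2,0)})\CH$, refined by degree bookkeeping. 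Finally, I would apply the irreducibility of joint harmonic correspondences to conclude that any nonzero degree-$d$ submodule maps onto $\theta(\pi)$. The delicate point is controlling cross terms coming from $\mathcal U(\g^{(2,0)})$, which raise $K$-degree but might not raise $K'$-degree in a compatible way. I expect this requires a careful comparison of the two degree functions on $\CP$.
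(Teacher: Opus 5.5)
Your reduction in Step~2 (it suffices that $\Theta^0=\ker(\Theta(\pi)\to\theta(\pi))$ contains no $\tKp$-type of degree $d$) is correct, but it only restates the theorem, and nothing after it proves it. The proposed contradiction does not exist. A proper submodule of $\Theta(\pi)$ containing a lowest degree $\tKp$-type is perfectly compatible with $\Theta(\pi)$ having a unique irreducible quotient. Excluding such a submodule is exactly what must be shown, so ``any nonzero degree-$d$ submodule maps onto $\theta(\pi)$'' is circular.

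The true fact you identify is that the image of $\CP^{(d)}$ in $\CP/\CN_\pi$ is the image of the joint harmonics (essentially Howe's Lemma~4.1). It cannot yield part (2), because it controls one vector, not a multiplicity space. Put $E=\Hom_{\tK}(\sigma,\pi)$, $F=\Hom_{\tKp}(\sigma',\Theta(\pi))$ and $\bar F=\Hom_{\tKp}(\sigma',\theta(\pi))$, with $\sigma'=\theta_{\CH}(\sigma)$. The image of $\CH_{\sigma,\sigma'}$ is a single tensor $x\in E\otimes F$, and $E$ can be $2$-dimensional (the $\Sp_4(\BR)\times\RO(m,m)$ example of the last section). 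Knowing that $x$ survives in $E\otimes\bar F$, or that $\CH_{\sigma,\sigma'}\simeq\sigma\otimes\sigma'$ is irreducible, gives no comparison between $\dim F$ and $\dim\bar F$.

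The missing idea is Howe's simultaneous cyclicity. The tensor $x$ generates $E\otimes F$, and its image $\bar x$ generates $E\otimes\bar F$, both under $\mathcal U(\g)^K$ (acting on $E$ alone) and under $\mathcal U(\g')^{K'}$ (acting on the second factor alone). A rank argument then finishes:
\begin{itemize}
\item if $E\otimes F=(A\otimes1)x$, the map $F^*\to E$, $\lambda\mapsto(1\otimes\lambda)x$, is injective, so $\dim F\le\dim E$;
\item symmetrically, $(1\otimes B)x=E\otimes F$ gives $\dim E\le\dim F$.
\end{itemize}
Applied to both lifts, this gives $\dim F=\dim E=\dim\bar F$, which is (2).

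Part (1) needs much less than your plan. Apply Howe's Lemma~4.1 to the two quotients $\pi\otimes\Theta(\pi)$ and $\pi\otimes\theta(\pi)$ of $\CP$. Both have exactly the $\tK$-types of $\pi$, so both sets of lowest degree $\tKp$-types equal $\theta_{\CH}(\LD(\tK,\pi))$.

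Step~1 is never used, and it misstates the structure:
\begin{itemize}
\item $\g'$ does not commute with $\tKp$, so $\CP_\tau$ is not $\mathcal U(\g'^{(2,0)})\CH(K')_\tau$; the correct generator is $\m^{(2,0)}$.
\item $\g$ and $\g'$ are in general not graded; for $G=\RO(p,q)$ one has $\g\cap\sp^{(2,0)}=0$. So Howe's decomposition is $\CP=\mathcal U(\m^{(2,0)})\,\mathcal U(\m'^{(2,0)})\,\CH$, not the one with $\g^{(2,0)},\g'^{(2,0)}$.
\end{itemize}
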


The proof of Theorem~\ref{Thm: intro-LDK} is based on the observation that a simultaneous cyclicity result in \cite{Howe1989TranscendingClassicalInvTheory} can be used to compare the multiplicities of corresponding lowest degree $\tK$-types and $\tKp$-types. 
We compare these multiplicities in both theta lifts with the multiplicity of the corresponding $\tK$-type in $\pi$ and obtain
the desired equality.

A natural further question is whether these lowest degree $\tKp$-types always occur with multiplicity one. In general, the answer is negative. In Section~\ref{Sec: mult two example}, we give an example in which a lowest degree $\tKp$-type occurs with multiplicity two.

Using the above analysis of lowest degree $\tKp$-types, we have a new description of the theta correspondence.

\begin{cort}\label{Cor:intro-characterization}
The small theta lift $\theta(\pi)$ is the unique irreducible \JH constituent of $\Theta(\pi)$ that contains a (and hence every) lowest degree $\tKp$-type of $\Theta(\pi)$.  
%More strongly, for every $\sigma'\in\LD_{K'}(\Theta(\pi))$, the unique irreducible constituent of $\Theta(\pi)$ containing $\sigma'$ is $\theta(\pi)$.\ZG{the second statement is really stronger than the first one? I'm not sure}
\end{cort}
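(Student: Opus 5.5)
We deduce the corollary from Theorem~\ref{Thm: intro-LDK} by a Jordan--H\"older counting argument for $\tKp$-multiplicities. Fix a lowest degree $\tKp$-type $\tau$ of $\Theta(\pi)$.

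First we show that $\theta(\pi)$ contains $\tau$. By Theorem~\ref{Thm: intro-LDK}(1), $\tau$ is also a lowest degree $\tKp$-type of $\theta(\pi)$, so in particular $\Hom_{\tKp}(\tau,\theta(\pi))\neq 0$. This holds for every lowest degree $\tKp$-type of $\Theta(\pi)$, which gives the parenthetical ``and hence every'': $\theta(\pi)$ contains all of them.

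Next we show that no other constituent contains $\tau$. Since $\Theta(\pi)$ has finite length and restriction to $\tKp$ is exact and semisimple on admissible modules, the multiplicity of $\tau$ is additive along a composition series:
\[
  \dim\Hom_{\tKp}(\tau,\Theta(\pi))=\sum_{\sigma}[\Theta(\pi):\sigma]\,\dim\Hom_{\tKp}(\tau,\sigma),
\]
where $\sigma$ runs over the isomorphism classes of irreducible constituents. Since $\theta(\pi)$ is a quotient of $\Theta(\pi)$, we have $[\Theta(\pi):\theta(\pi)]\ge 1$. By Theorem~\ref{Thm: intro-LDK}(2), the left-hand side equals $\dim\Hom_{\tKp}(\tau,\theta(\pi))$, which is positive. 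Hence all remaining terms vanish. This gives two consequences:
\begin{itemize}
  \item $[\Theta(\pi):\theta(\pi)]=1$, since the $\sigma=\theta(\pi)$ term alone already accounts for the full multiplicity (this is Theorem~\ref{Thm: intro-JH});
  \item every constituent $\sigma\not\simeq\theta(\pi)$ satisfies $\Hom_{\tKp}(\tau,\sigma)=0$.
\end{itemize}

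Consequently, $\theta(\pi)$ is, up to isomorphism and counted with multiplicity, the unique constituent containing $\tau$. Because $\tau$ was an arbitrary lowest degree $\tKp$-type of $\Theta(\pi)$, this gives uniqueness for ``a'' such type, and the first paragraph gives ``every''.

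The only point that needs care is the additivity formula, which relies on admissibility of $\Theta(\pi)$. Every $\tKp$-type occurs with finite multiplicity in $\CP$, so $\Theta(\pi)$ is admissible and the formula holds. All of the substantive content is in Theorem~\ref{Thm: intro-LDK}.
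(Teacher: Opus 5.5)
Your argument is the paper's own: $\theta(\pi)$ contains $\tau$ by Theorem~\ref{Thm: intro-LDK}(1), and every other constituent is excluded by counting $\tau$-multiplicities along a composition series against Theorem~\ref{Thm: intro-LDK}(2). This is the same inequality that proves Theorem~\ref{Thm: JH small theta}. The cancellation logic is correct. The justification of finiteness in your last paragraph, however, is false.

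It is not true that every $\tKp$-type occurs with finite multiplicity in $\CP$. Suppose the partner $M$ of $K'$ in the auxiliary pair $(M,K')$ is noncompact. Then any nonzero $X\in\m^{(2,0)}$ is multiplication by a nonzero quadratic polynomial. It commutes with $\tKp$ and is injective on $\CP$. So $X^k$ embeds the $\tau$-isotypic part of $\CP_d$ into that of $\CP_{d+2k}$ for every $k\ge0$. Concretely, for $(\RO(1,1),\Sp_2(\BR))$ one has $\CP=\BC[z_1,z_2]$. There $\tKp$ acts on $z_1$ and $z_2$ through opposite characters, up to a common twist, so all the $(z_1z_2)^k$ lie in a single $\tKp$-isotypic component. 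Thus admissibility of $\Theta(\pi)$ cannot be read off from $\CP$.

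You do not actually need admissibility in that form. The additivity formula holds as an identity of cardinals using only two facts: $\Theta(\pi)$ has finite length, and $\Hom_{\tKp}(\tau,-)$ is exact on locally finite $\tKp$-modules. The cancellation needs only $\dim\Hom_{\tKp}(\tau,\theta(\pi))<\infty$. This holds because $\theta(\pi)$ is an irreducible $(\g',\tKp)$-module and hence admissible. Alternatively, Theorem~\ref{Thm: LDK multiplicity eq} identifies it with $\dim\Hom_{\tK}(\sigma,\pi)$, where $\sigma\in\LD(\tK,\pi)$ satisfies $\theta_{\CH}(\sigma)=\tau$. With this replacement your proof is complete.
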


The paper is organized as follows.
In Section~\ref{Sec: preliminary}, we recall basic notions in the theory of theta correspondence and results from Howe's original proof.
In Section~\ref{Sec: LDK}, we compare the lowest degree $\tKp$-types in the full and small theta lifts, and prove Theorem~\ref{Thm: intro-LDK}.
In Section~\ref{Sec: Proof main}, we prove Theorem~\ref{Thm: intro-JH} and its smooth analog. 
Finally, in Section~\ref{Sec: mult two example}, we give an example in which a lowest degree $\tKp$-type occurs with multiplicity two.

\section{Preliminaries}\label{Sec: preliminary}
In this section, we recall some preliminary material which is mainly contained in Howe's original paper \cite{Howe1989TranscendingClassicalInvTheory} and Adams' survey note \cite{Adams2007ThetaR}.
We also fix the notation to be used in the paper.

We retain the notation of the introduction.
Recall that $\CP=\BC[z_1,\ldots,z_n]$.
Let
\[
    \CP:=\bigoplus_{d\geq0}\CP_d,\qquad
    \CP^{(d)}:=\bigoplus_{0\leq j\leq d}\CP_j
    \quad(d\geq0),\qquad
    \CP^{(-1)}:=0,
\]
where $\CP_d$ consists of the homogeneous polynomials of degree $d$.
Through the action of $\omega$, we identify $\sp$ with its
image in the algebra of differential operators on $\CP$.
It has decomposition
\[
 \sp=\sp^{(1,1)}\oplus\sp^{(2,0)}\oplus\sp^{(0,2)},
\]
where,
\begin{equation*}
 \begin{aligned}
 \sp^{(1,1)}&=\Span_{\BC}\{z_i\frac{\partial}{\partial z_j} + \frac{\partial}{\partial z_j} z_i \mid 1\leq i,j\leq n\},\\
 \sp^{(2,0)}&=\Span_{\BC}\{z_i z_j \mid 1\leq i,j\leq n\},\\
 \sp^{(0,2)}&=\Span_{\BC}\{\frac{\partial^2}{\partial z_i\partial z_j} \mid 1\leq i,j\leq n\}.
 \end{aligned}
\end{equation*}
Under this identification, we have $\u=\sp^{(1,1)}$. Note that these three spaces consist of operators preserving degree, raising degree by two, and lowering degree by two, respectively; see \cite[(2.2)--(2.3)]{Howe1989TranscendingClassicalInvTheory}. Thus, each $\CP_d$ is a finite-dimensional $\widetilde U$-module.

We now introduce the notion of degree for a $\tK$-type, which plays an important role in the theory of archimedean theta correspondence.

\begin{dfnt}
Let $\sigma$ be an irreducible representation of $\tK$. Define
\[
    \deg\sigma:=
    \min\{d\geq0 \mid \Hom_{\tK}(\sigma,\CP_d)\neq0\}.
\]
We adopt the convention that $\deg \sigma := \infty$ if $\Hom_{\tK}(\sigma,\CP) = 0$. 
Similarly, one can define the degree of an irreducible representation of $\tKp$.
\end{dfnt}

\begin{rremark}
    The notion of degree depends on the fixed Weil representation.
\end{rremark}

We will use the following notation for representations of compact groups.
For a compact group $L$, let $\CR(L)$ be the set of equivalence classes of irreducible $L$-representations.
For a locally finite $L$-representation $V$, denote by $V_\sigma$ the $\sigma$-isotypic component of $V$, and set
\[
 \CR(L, V):=\{\sigma\in\CR(L) \mid \Hom_L(\sigma,V)\ne0\}.
\]
\begin{dfnt}
Let $L$ be either $\tK$ or $\tKp$, and let $V$ be a nonzero locally finite $L$-representation such that $\CR(L,V)\subseteq\CR(L,\CP)$. Define
\[
    \LD(L,V):= \{ \sigma\in\CR(L,V) \mid  \deg\sigma= \min_{\tau\in\CR(L,V)}\deg\tau \}.
\]
The elements of $\LD(L,V)$ are called the lowest degree
$L$-types of $V$.
\end{dfnt}
Since the degrees of the $L$-types of $V$ are finite nonnegative integers, the set $\LD(L,V)$ is nonempty.
In particular, for $\pi\in\CR(\g,\tK,\omega)$, the set $\LD(\tK,\pi)$ is well-defined and nonempty.

Recall that there are auxiliary reductive dual pairs $(K,M')$ and $(M,K')$ in $\Sp_{2n}(\BR)$, with inclusions
\begin{equation*}\label{Eq: dual pair}
\begin{array}{ccccc}
M  & \longleftrightarrow & K' \\[-2pt]
\rotatebox{90}{$\subseteq$} && \rotatebox{90}{$\supseteq$} \\[-2pt]
G  & \longleftrightarrow & G' \\[-2pt]
\rotatebox{90}{$\subseteq$} && \rotatebox{90}{$\supseteq$} \\[-2pt]
K  & \longleftrightarrow & M'
\end{array},
\end{equation*}
and each row forms a reductive dual pair. 
For $(a,b)=(1,1),(2,0),(0,2)$, let
\[
 \m^{(a,b)}:=\m\cap\sp^{(a,b)},\qquad
 \m'^{(a,b)}:=\m'\cap\sp^{(a,b)}.
\]
Since $K,K'\subseteq U$, $\m$ and $\m'$ also admit decomposition as $\sp$,
\[
 \begin{aligned}
 \m&=\m^{(1,1)}\oplus\m^{(2,0)}\oplus\m^{(0,2)},\\
 \m'&=\m'^{(1,1)}\oplus\m'^{(2,0)}\oplus\m'^{(0,2)}.
 \end{aligned}
\]

\begin{dfnt}[{\cite[(3.6)]{Howe1989TranscendingClassicalInvTheory}}]
\label{Def: harmonics}
The spaces of $K$-harmonic and $K'$-harmonic polynomials are
\[
 \begin{aligned}
 \CH(K)&:=\{f\in\CP \mid Xf=0\ \text{for every }X\in\m'^{(0,2)}\},\\
 \CH(K')&:=\{f\in\CP \mid Xf=0\ \text{for every }X\in\m^{(0,2)}\}.
 \end{aligned}
\]
The joint harmonics space $\CH$ is 
\[
\CH:=\CH(K)\cap\CH(K').
\]
All three spaces are stable under $\widetilde K\cdot\widetilde K'$ action.

For $\sigma \in \CR(\tK)$ and $\sigma' \in \CR(\tKp)$, set
\[
\CH_{\sigma, \sigma'} := \CH(K)_{\sigma}\cap\CH(K')_{\sigma'},
\]
which is the $(\sigma,\sigma')$-isotypic component of $\CH$.
\end{dfnt}

The following result describes the correspondence in joint harmonics and its compatibility with lowest degree types under theta lifting.

\begin{lemt}[{\cite[Lemma 3.3]{Howe1989TranscendingClassicalInvTheory}, \cite[Theorem 7.3]{Adams2007ThetaR}}]\label{Lem: joint harmonic correspondence}
\begin{enumerate}
\item There is a bijection
\[
\theta_{\CH}: \CR(\tK,\CH)\longleftrightarrow\CR(\tKp,\CH).
\]
characterized by
\[
\CH_{\sigma,\sigma'} \neq 0 \quad\Longleftrightarrow\quad \sigma'=\theta_{\CH}(\sigma).
\]
Moreover, when $\sigma'=\theta_{\CH}(\sigma)$, we have
\[
\CH_{\sigma,\sigma'}\simeq\sigma\otimes\sigma',\qquad
 \deg \sigma=\deg \sigma'.
\]

\item Suppose $\pi\in\CR(\g,\tK,\omega)$ and $\sigma$ is a lowest degree $\tK$-type of $\pi$. Then $\sigma\in\CR(\tK,\CH)$.
Moreover, $\theta_{\CH}(\sigma)$ is a lowest degree $\tK'$-type of $\theta(\pi)$.
\end{enumerate}
\end{lemt}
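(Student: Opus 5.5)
The plan is to treat the two parts separately, following Howe's strategy. Part (1) reduces to the multiplicity-free decompositions for the two compact dual pairs $(K,M')$ and $(M,K')$. Part (2) is an argument on degree filtrations in $\CP$.

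\textbf{Part (1).} I first recall the classical facts for the compact pair $(K,M')$.
\begin{itemize}
\item $\CP=U(\m'^{(2,0)})\cdot\CH(K)$.
\item As a $\tK\times\widetilde{(M'\cap U)}$-module, $\CH(K)=\bigoplus_\sigma \sigma\otimes\rho(\sigma)$, where the $\rho(\sigma)$ are irreducible and pairwise distinct.
\item The $\sigma$-isotypic part $\CH(K)_\sigma$ sits in the single degree $\deg\sigma$. This is because $M'\cap U$ preserves degree and $\rho(\sigma)$ is irreducible, and because the lowest degree occurrence of $\sigma$ in $\CP$ must be harmonic, as $\m'^{(2,0)}$ raises degree.
\end{itemize}
The symmetric statements hold for $(M,K')$: $\CH(K')=\bigoplus_{\sigma'}\tau(\sigma')\otimes\sigma'$, with $\CH(K')_{\sigma'}$ concentrated in degree $\deg\sigma'$.

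Now suppose $\CH_{\sigma,\sigma'}\neq0$. Since this space lies in both $\CH(K)_\sigma$ and $\CH(K')_{\sigma'}$, we get $\deg\sigma=\deg\sigma'$ immediately. Moreover $\CH_{\sigma,\sigma'}$ is simultaneously of the form $\sigma\otimes W$, with $W\subseteq\rho(\sigma)$ the joint kernel of $\m^{(0,2)}$ on the $\sigma'$-isotypic part, and of the form $V\otimes\sigma'$. Hence $\CH_{\sigma,\sigma'}\simeq\sigma\otimes\sigma'^{\oplus m}$ for some $m\ge1$.

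It remains to show $m=1$ and that $\sigma$ determines $\sigma'$. Both follow if the $\m^{(0,2)}$-kernel in $\rho(\sigma)$ is an irreducible $\tKp$-module. For this I would use that $K'\subseteq M'\cap U$ and that $\m^{(0,2)}$ is a $K'$-stable space of lowering operators. Concretely:
\begin{itemize}
\item regard $\bigoplus_\sigma\sigma\otimes\rho(\sigma)$ inside the Fock space for the pair $(M,K')$;
\item use that each $\tau(\sigma')$ is an irreducible lowest-weight $(\m,\tilde K_M)$-module whose $\m^{(0,2)}$-kernel is the single irreducible $K$-type in its bottom degree;
\item compare the $(\sigma,\sigma')$-isotypic parts of the two decompositions, which forces $m=1$ and uniqueness of $\sigma'$.
\end{itemize}
The symmetric argument gives the inverse map, so $\theta_\CH$ is a bijection.

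\textbf{Part (2).} Let $0\neq\phi\in\Hom_{\g,\tK}(\CP,\pi)$ and let $d=\deg\sigma$ for a lowest degree $\tK$-type $\sigma$ of $\pi$.
\begin{enumerate}
\item By $\tK$-equivariance, $\phi(\CP_\sigma)=\pi_\sigma\neq0$.
\item Every element of $\CP_\sigma$ lies in $U(\m'^{(2,0)})\CH(K)_\sigma$. The $K'$-harmonic decomposition then lets us write $\CP=U(\m^{(2,0)})\cdot\CH(K')$.
\item I would show that $\phi$ kills $\m^{(2,0)}\CP$ applied to anything whose $\tK$-types are of degree at least $d$ but not harmonic. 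The idea is that $\m^{(2,0)}$ is spanned by operators built from $\g$ together with $K$-equivariant pieces, which shift degree strictly upward. So if $\phi$ were nonzero only on such images, $\pi$ would contain a $\tK$-type of degree $<d$ or would vanish on $\sigma$.
\item Hence $\phi$ is nonzero on $\CH_\sigma$, so $\sigma\in\CR(\tK,\CH)$.
\end{enumerate}

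For the second claim of part (2), look at $\CP/\CN_\pi\simeq\pi\otimes\Theta(\pi)$. The image of $\CH_{\sigma,\theta_\CH(\sigma)}$ is nonzero and lands in $\sigma\otimes\Theta(\pi)_{\theta_\CH(\sigma)}$. Passing to the unique irreducible quotient $\theta(\pi)$ and running the same argument with the roles of the two members of the dual pair reversed shows that $\theta_\CH(\sigma)$ occurs in $\theta(\pi)$. Its degree equals $d$ by part (1). Minimality of that degree among the $\tKp$-types of $\theta(\pi)$ follows by applying the first claim of part (2) to $\theta(\pi)$, together with the degree equality, and using the bijection to compare the two minima.

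\textbf{Main obstacle.} I expect step 3 above to be the hard step: showing that a lowest-degree $\tK$-type of $\pi$ is actually detected on harmonics. This requires precise control of how $\g$ and $\m^{(2,0)}$ interact with the degree filtration $\CP^{(d)}$. The first thing I would try is Howe's filtration argument: show $\phi(\CP^{(d-1)})_\sigma=0$ and that $\CP_d\cap\CP_\sigma\subseteq\CH(K)+\m^{(2,0)}\CP^{(d-2)}$.
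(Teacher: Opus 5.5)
The paper takes this lemma from Howe and Adams without proof, so I am measuring your sketch against what a complete argument needs. There are genuine gaps in both parts.

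\textbf{Part (1).} Everything up to $\CH_{\sigma,\sigma'}\simeq(\sigma\otimes\sigma')^{\oplus m}$ and $\deg\sigma=\deg\sigma'$ is fine. The step that carries the content ($m=1$, and $\sigma$ determines $\sigma'$) has no working mechanism, for two reasons.
\begin{enumerate}
\item $\m^{(0,2)}$ is normalized but not centralized by $K$. For $(\RO(p,q),\Sp_{2n}(\BR))$ it transforms like $\BC^p\otimes\BC^q$ under $K=\RO(p)\times\RO(q)$. So ``the $\m^{(0,2)}$-kernel in $\rho(\sigma)$'' is not defined; only the subspace $W$ with $\CH_\sigma=\sigma\otimes W$ makes sense.
\item The $\m^{(0,2)}$-kernel of the lowest weight module $\Hom_{\tKp}(\sigma',\CP)$ is the irreducible $\wtd{K}_M$-type $\tau(\sigma')$, where $K_M:=M\cap U\supseteq K$. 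It is not a single $\tK$-type, and branching $\wtd{K}_M\downarrow\tK$ (e.g.\ $\RU(p)\downarrow\RO(p)$) has multiplicities.
\end{enumerate}
Comparing isotypic parts in degree $d$ only identifies $m$ with $\dim\Hom_{\tK}(\sigma,\tau(\sigma'))=\dim\Hom_{\tKp}(\sigma',\rho(\sigma))$. Nothing in your sketch shows these equal $1$, or that only one constituent of $\rho(\sigma)|_{\tKp}$ is $\m^{(0,2)}$-harmonic.

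A standard way to close this is the compact see-saw pair $(K_M,K_{M'})$ with $K_{M'}:=M'\cap U\supseteq K'$. These groups commute, and $\CP$ decomposes under $\wtd{K}_M\times\wtd{K}_{M'}$ multiplicity-freely as $\bigoplus\alpha\otimes\alpha'$, with $\alpha\leftrightarrow\alpha'$ injective on both sides ($\mathrm{GL}\times\mathrm{GL}$ duality). Since $\CH(K)$ is $\wtd{K}_{M'}$-stable and $\CH(K')$ is $\wtd{K}_M$-stable, both split along these summands:
\begin{enumerate}
\item if $\alpha'=\rho(\sigma)$, then $\CH(K)\cap(\alpha\otimes\alpha')=X_\sigma\otimes\alpha'$ with $X_\sigma\subseteq\alpha$ and $X_\sigma\simeq\sigma$;
\item if $\alpha=\tau(\sigma')$, then $\CH(K')\cap(\alpha\otimes\alpha')=\alpha\otimes X'_{\sigma'}$ with $X'_{\sigma'}\simeq\sigma'$.
\end{enumerate}
Intersecting gives $\CH\cap(\alpha\otimes\alpha')=X_\sigma\otimes X'_{\sigma'}$, which yields $m=1$ and the bijection at once.

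\textbf{Part (2).} There is also a gap exactly where you flagged it, and the sketched mechanism points the wrong way. Two things are needed.
\begin{enumerate}
\item The relations $\m^{(2,0)}\subseteq\g+\m^{(0,2)}$ and $\m'^{(2,0)}\subseteq\g'+\m'^{(0,2)}$. The correction terms lower degree rather than raise it, which is what lets them be absorbed.
\item A $\g\oplus\g'$-stable subspace such as $\CN_\pi$, or the kernel $\CN$ of $\CP\to\pi\otimes\theta(\pi)$, rather than a single $\phi$, which is only $\g$-equivariant.
\end{enumerate}
With these the argument runs as follows. First, $\CP^{(d-1)}\subseteq\CN$. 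Next, suppose $\CH(K)_\sigma\subseteq\CN$. Induction on degree, using $\CP_\sigma=\mathcal U(\m'^{(2,0)})\CH(K)_\sigma$, the primed relation and $\g'$-stability, gives $\CP_\sigma\subseteq\CN$, a contradiction. So $\CH(K)_\sigma\not\subseteq\CN$; your sketch has nothing playing this role. Finally, split $f\in\CH(K)_\sigma=\CP_\sigma\cap\CP_d$ along $\CP_d=\CH(K')_d\oplus\m^{(2,0)}\CP_{d-2}$. The first component lies in $\CH_\sigma$, and the second lies in $\g\CP_{d-2}+\CP_{d-4}\subseteq\CN$. Hence $\CH_\sigma\not\subseteq\CN$. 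Your proposed inclusion $\CP_d\cap\CP_\sigma\subseteq\CH(K)+\m^{(2,0)}\CP^{(d-2)}$ is vacuous, since the left side already equals $\CH(K)_\sigma$.

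For the second claim, do not reverse roles. Run the same argument with $\CN=\ker(\CP\to\pi\otimes\theta(\pi))$, whose $\tK$-types are those of $\pi$. This shows $\CH_\sigma\simeq\sigma\otimes\theta_\CH(\sigma)$ survives in $\pi\otimes\theta(\pi)$. Minimality then follows from the $\tKp$-side version of the same argument for the same $\CN$, which compares the two minimal degrees; this is the content of Howe's Lemma 4.1 as recorded in the paper.
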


We next recall the description of lowest degree $\tK$-types in nonzero quotients of $\CP$.
Let $\CN \subsetneq \CP$ be a proper $(\g \oplus \g', \tK \cdot \tKp)$-submodule, and consider the quotient $\CP/\CN$.
Denote by $\CY^{(d)}$ the $(\g \oplus \g', \tK \cdot \tKp)$-module generated by $\CP^{(d)}$. Let $\delta(\CN)$ be the largest integer $d$ such that $\CY^{(d)}$ is contained in $\CN$.

\begin{lemt}[{\cite[Lemma 4.1]{Howe1989TranscendingClassicalInvTheory}}]\label{Lem: Howe quotient degree}
\begin{enumerate}[label=\textup{(\alph*)}]
    \item
    %The number $\delta(\CN)$ is determined solely by the structure of $\CP/\CN$ as a $\tK$-module, or as a $\tKp$-module.
    We have
    \begin{equation*}
        \begin{aligned}
            1+\delta(\CN)
            &=
            \min\bigl\{
                \degree \sigma :
                \sigma \in \CR(\tK,\CP/\CN)
            \bigr\} \\
            &=
            \min\bigl\{
                \degree \sigma' :
                \sigma' \in \CR(\tKp,\CP/\CN)
            \bigr\}.
        \end{aligned}
    \end{equation*}

    \item 
    If $\sigma \in \CR(\tK,\CP/\CN)$ and
    $\degree \sigma=\delta(\CN)+1$, then
    \[
        \sigma \in \CR(\tK, \CH) \quad \text{and} \quad \theta_{\CH}(\sigma) \in \CR(\tKp,\CP/\CN).
    \]
\end{enumerate}
\end{lemt}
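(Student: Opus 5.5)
We prove the three parts in order: a lower bound on degrees in the quotient, an upper bound, and then the harmonic statement (b), arguing symmetrically for $\tK$ and $\tKp$. Throughout we write $\delta=\delta(\CN)$ and use the filtration $\CY^{(-1)}=0\subseteq\CY^{(0)}\subseteq\CY^{(1)}\subseteq\cdots$ of $\CP$, which exhausts $\CP$. By definition $\CY^{(\delta)}\subseteq\CN$ and $\CY^{(\delta+1)}\not\subseteq\CN$.

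\emph{Lower bound.} Let $\sigma\in\CR(\tK)$ with $\deg\sigma\le\delta$. We show $\CP_\sigma\subseteq\CY^{(\deg\sigma)}\subseteq\CN$, so that $\sigma\notin\CR(\tK,\CP/\CN)$. By Howe's structure theory for the compact dual pair $(K,M')$, $\CP=U(\m'^{(2,0)})\cdot\CH(K)$. The $\sigma$-isotypic part $\CH(K)_\sigma$ is an irreducible $(\m',\tK')$-module, and it is generated by its component in the lowest degree $\deg\sigma$. Hence $\CP_\sigma$ is generated by $(\CP_\sigma)\cap\CP_{\deg\sigma}$ under the degree-raising operators.

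The point is to realize these operators inside $U(\g)U(\g')$ applied to $\CP^{(\deg\sigma)}$. For this we would invoke Howe's result $\CP^{(d)}\subseteq U(\g)U(\g')\cdot(\CH\cap\CP^{(d)})$, whose proof runs by induction on $d$ using $\sp^{(2,0)}$. We expect this to be the main obstacle, since $\m'\supsetneq\g'$ in general: $\m'^{(2,0)}$ is not inside $\g'$. The intended way around this is to trade $\m'^{(2,0)}$-raising for $\g^{(2,0)}$-raising through the $(\m,\m')$/$(\g,\g')$ see-saw. Concretely, we would prove that the image of $\CP_d$ in $\CP/\CY^{(d-1)}$ is spanned by joint harmonics of degree $d$. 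Granting this, $\CY^{(\deg\sigma)}$ contains all of $\CP_\sigma$.

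\emph{Upper bound.} Since $\CY^{(\delta+1)}\not\subseteq\CN$, while $\CY^{(\delta+1)}$ is generated by $\CP^{(\delta+1)}$ and $\CP^{(\delta)}\subseteq\CN$, some element of $\CP_{\delta+1}$ has nonzero image in $\CP/\CN$. By the spanning claim above, we may take this element to be a joint harmonic in $\CH_{\sigma,\sigma'}$ with $\deg\sigma=\deg\sigma'=\delta+1$; this uses Lemma~\ref{Lem: joint harmonic correspondence}(1), which gives $\deg\sigma=\deg\sigma'$. Its image produces a $\tK$-type $\sigma$ and a $\tKp$-type $\sigma'$ in $\CP/\CN$, both of degree $\delta+1$. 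Together with the lower bound, which applies verbatim to $\tKp$ by symmetry, this proves both equalities in (a).

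\emph{Part (b).} Let $\sigma\in\CR(\tK,\CP/\CN)$ with $\deg\sigma=\delta+1$. By the lower bound, $\CP^{(\delta)}$ maps to zero, so $\sigma$ occurs in the image of $\CP_{\delta+1}$. Modulo $\CY^{(\delta)}\subseteq\CN$, that image is the image of $\CH\cap\CP_{\delta+1}=\bigoplus\CH_{\tau,\theta_\CH(\tau)}$. Taking $\sigma$-isotypic parts, some nonzero element of $\CH_{\sigma,\theta_\CH(\sigma)}\simeq\sigma\otimes\theta_\CH(\sigma)$ survives in $\CP/\CN$. Therefore $\sigma\in\CR(\tK,\CH)$. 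Moreover, $\CN$ is $\tK\cdot\tKp$-stable and the image of this irreducible $\tK\cdot\tKp$-module is nonzero, so the whole of $\sigma\otimes\theta_\CH(\sigma)$ injects into $\CP/\CN$. This gives $\theta_\CH(\sigma)\in\CR(\tKp,\CP/\CN)$.
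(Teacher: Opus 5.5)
The paper gives no proof of this lemma; it quotes it from Howe's Lemma~4.1, so your reconstruction has to stand on its own. Your upper-bound step is fine once the spanning claim $\CP_d\subseteq(\CH\cap\CP_d)+\CY^{(d-1)}$ is available. The genuine gap is that \emph{neither the lower bound nor the first step of (b) follows from that claim}.

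For $\deg\sigma=e<d$, the spanning claim only gives $(\CP_d)_\sigma\subseteq\CY^{(d-1)}$, because $\CH_\sigma\subseteq\CP_e$. It does not give $(\CP_d)_\sigma\subseteq\CY^{(e)}$. Since $\mathcal U(\g)$ does not commute with $\tK$, the $\sigma$-isotypic part of $\CY^{(d-1)}$ can receive contributions from $\tK$-types of degree larger than $e$ inside $\CP^{(d-1)}$. So ``granting this, $\CY^{(\deg\sigma)}$ contains all of $\CP_\sigma$'' is a non sequitur. The same problem arises in (b). The inclusion $\CP^{(\delta)}\subseteq\CN$, which holds by the definition of $\delta$ rather than by the lower bound, does not imply that $\sigma$ occurs in the image of $\CP_{\delta+1}$. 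A priori, $\sigma$ could survive in $\CP/\CN$ only through higher-degree parts of $\CP_\sigma$.

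The missing idea is the cyclicity statement
\[
\CP_\sigma=\mathcal U(\g')\,\CH(K)_\sigma,\qquad \CH(K)_\sigma\subseteq\CP_{\deg\sigma}.
\]
It comes from the see-saw inclusion $\m'^{(2,0)}\subseteq\g'+\m'^{(0,2)}$. This gives $\m'=\g'+(\m'^{(1,1)}\oplus\m'^{(0,2)})$, and PBW then gives $\mathcal U(\m')=\mathcal U(\g')\,\mathcal U(\m'^{(1,1)}\oplus\m'^{(0,2)})$. The second factor preserves $\CH(K)_\sigma$, since $\m'^{(0,2)}$ kills it and $\m'^{(1,1)}$ normalizes $\m'^{(0,2)}$ and commutes with $K$.

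This is the correct form of your ``trade''. Trading $\m'^{(2,0)}$ for $\g^{(2,0)}$ cannot work: $\g$ mixes $\tK$-types, and $\g\cap\sp^{(2,0)}$ is often $0$ (for example $G=\RO(1,1)$).

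With this identity the lower bound is immediate: $\CP_\sigma\subseteq\mathcal U(\g')\CP^{(\deg\sigma)}\subseteq\CY^{(\deg\sigma)}$. In (b), it forces the image of $\CH(K)_\sigma\subseteq\CP_{\delta+1}$ in $\CP/\CN$ to be nonzero.

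The spanning claim, which you only grant, follows from the same inclusion together with its analogue $\m^{(2,0)}\subseteq\g+\m^{(0,2)}$. The Fock-orthogonal decompositions
\[
\CP_d=(\CH(K)\cap\CP_d)\oplus\m'^{(2,0)}\CP_{d-2}=(\CH(K')\cap\CP_d)\oplus\m^{(2,0)}\CP_{d-2}
\]
give $\CP_d=(\CH\cap\CP_d)\oplus(\m'^{(2,0)}\CP_{d-2}+\m^{(2,0)}\CP_{d-2})$. Moreover $\m'^{(2,0)}\CP_{d-2}\subseteq\g'\CP_{d-2}+\CP_{d-4}\subseteq\CY^{(d-1)}$, and likewise for $\m$.

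With these repairs, (b) goes through as you wrote it: $\CH(K)_\sigma\subseteq\CH_\sigma+\CN$, so $\CH_{\sigma,\theta_{\CH}(\sigma)}\not\subseteq\CN$.

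A minor point: $\CH(K)_\sigma$ is not an $\m'$-module. It is $\sigma$ tensored with an irreducible $\widetilde{M'\cap U}$-module.
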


Together with the equality of degrees in the joint harmonic
correspondence, we obtain the following consequence.

\begin{cort}\label{Cor: LDK correspondence}
    Notation as in Lemma~\ref{Lem: Howe quotient degree}.
    The joint harmonic correspondence restricts to a bijection
    \[
    \theta_{\CH}:\LD(\tK, \CP/\CN)
    \xrightarrow{\ \sim\ }
    \LD(\tKp, \CP/\CN).
    \]
\end{cort}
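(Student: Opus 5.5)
The plan is to deduce the corollary from Lemma~\ref{Lem: Howe quotient degree} together with the degree equality $\deg\sigma=\deg\theta_{\CH}(\sigma)$ from Lemma~\ref{Lem: joint harmonic correspondence}(1). Throughout, set $\delta:=\delta(\CN)$. Since $\CN\subsetneq\CP$, the quotient $\CP/\CN$ is nonzero. Every $\tK$-type and every $\tKp$-type of $\CP/\CN$ occurs in $\CP$, so both $\LD$ sets are defined. By Lemma~\ref{Lem: Howe quotient degree}(a), the minimal degree of a $\tK$-type of $\CP/\CN$ and the minimal degree of a $\tKp$-type of $\CP/\CN$ both equal $\delta+1$. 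Hence
\[
\LD(\tK,\CP/\CN)=\{\sigma\in\CR(\tK,\CP/\CN)\mid \deg\sigma=\delta+1\},
\]
and similarly for $\tKp$.

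\emph{Forward direction.} Take $\sigma\in\LD(\tK,\CP/\CN)$. By Lemma~\ref{Lem: Howe quotient degree}(b), we have $\sigma\in\CR(\tK,\CH)$, so $\theta_{\CH}(\sigma)$ is defined, and moreover $\theta_{\CH}(\sigma)\in\CR(\tKp,\CP/\CN)$. By Lemma~\ref{Lem: joint harmonic correspondence}(1), $\deg\theta_{\CH}(\sigma)=\deg\sigma=\delta+1$. This is the minimal degree on the $\tKp$-side, so $\theta_{\CH}(\sigma)\in\LD(\tKp,\CP/\CN)$. Thus $\theta_{\CH}$ maps $\LD(\tK,\CP/\CN)$ into $\LD(\tKp,\CP/\CN)$. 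The map is injective because $\theta_{\CH}$ is a bijection.

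\emph{Surjectivity.} Here I need the analogue of Lemma~\ref{Lem: Howe quotient degree}(b) with the roles of $\tK$ and $\tKp$ exchanged. The setup is symmetric in the two members of the dual pair: $\CN$ is a $(\g'\oplus\g,\tKp\cdot\tK)$-submodule, $\CY^{(d)}$ and $\delta$ are defined symmetrically, and $\CH$ is defined symmetrically. So Howe's lemma applied to the pair $(G',G)$ gives the following. If $\sigma'\in\LD(\tKp,\CP/\CN)$, then $\sigma'\in\CR(\tKp,\CH)$ and $\theta_{\CH}^{-1}(\sigma')\in\CR(\tK,\CP/\CN)$. By the degree equality, $\deg\theta_{\CH}^{-1}(\sigma')=\delta+1$, so $\theta_{\CH}^{-1}(\sigma')\in\LD(\tK,\CP/\CN)$ and it maps to $\sigma'$. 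Hence the restriction is a bijection.

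The only delicate point is this symmetry. I must check that the joint harmonic correspondence obtained from the swapped pair is the inverse of $\theta_{\CH}$. This is immediate from the characterization $\CH_{\sigma,\sigma'}\neq0\Leftrightarrow\sigma'=\theta_{\CH}(\sigma)$, which is symmetric in $\sigma$ and $\sigma'$.
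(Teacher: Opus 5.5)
Your proposal is correct and follows essentially the same route as the paper: Lemma~\ref{Lem: Howe quotient degree}(b) together with $\deg\sigma=\deg\theta_{\CH}(\sigma)$ gives $\theta_{\CH}\bigl(\LD(\tK,\CP/\CN)\bigr)\subseteq\LD(\tKp,\CP/\CN)$, and the symmetry of Howe's lemma in the two members of the dual pair gives the reverse direction. Your explicit check that the swapped correspondence is $\theta_{\CH}^{-1}$ spells out a step the paper leaves implicit.
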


\begin{proof}
    According to Lemma~\ref{Lem: Howe quotient degree}, we know that the map 
    \[
    \theta_{\CH}:\LD(\tK, \CP/\CN)
    \xrightarrow{}
    \CR(\tKp,\CP/\CN)
    \]
    is well-defined. 
    Note that the statement in Lemma~\ref{Lem: Howe quotient degree} is symmetric, thus we only need to show 
    \[
    \theta_{\CH}\bigl(\LD(\tK,\CP/\CN)\bigr)
    \subseteq\LD(\tKp,\CP/\CN).
    \]
    This follows from $\deg \sigma = \deg \theta_{\CH}(\sigma)$ as in Lemma~\ref{Lem: joint harmonic correspondence}.
\end{proof}

We will also need an intermediate space $\CZ$ introduced in \cite[Equation (4.2)]{Howe1989TranscendingClassicalInvTheory}, where it is denoted by $\CZ^{\tau, \tau'}$. 
Since the explicit construction of $\CZ$ will not be needed here, we only record the properties of $\CZ$ that are relevant to our argument.

\begin{lemt}[{\cite[Equation (4.8)]{Howe1989TranscendingClassicalInvTheory}}]\label{Lem: Z existence}
For $\pi \in \CR(\g,\tK,\omega)$, consider $\CN_{\pi}\subseteq\CP$ as in Equation \eqref{Eq: N pi}.
Take $\sigma \in \LD(\tK, \CP/\CN_{\pi})$, and let $\sigma' := \theta_{\CH}(\sigma)$, $d :=\deg\sigma-1$.
Then there exists a $(\g\oplus \g',\tK\cdot \tKp)$-module
\[
\CZ\subseteq \CP/\CY^{(d)}
\]
with the following properties:
\begin{enumerate}
    \item the natural image of
    $\CH_{\sigma,\sigma'}$ in $\CP/\CY^{(d)}$
    is nonzero and is contained in $\CZ$.

    \item the natural quotient map
    \[
    \CP/\CY^{(d)}\twoheadrightarrow \CP/\CN_{\pi}
    \]
    restricts to a surjection
    \[
    \CZ\twoheadrightarrow\CP/\CN_{\pi}.
    \]

    \item Let
    \[
    M_{\CZ} :=
    \Hom_{\tK\cdot \tKp}(\CH_{\sigma,\sigma'},\CZ),
    \]
    and denote by $e_0\in M_{\CZ}$ the element defined by the natural inclusion $\CH_{\sigma,\sigma'}\hookrightarrow {\CZ}$, then
    \[
    M_{\CZ}
    =
    U(\mathfrak g)^K (e_0)
    =
    U(\mathfrak g')^{K'} (e_0).
    \]
\end{enumerate}
\end{lemt}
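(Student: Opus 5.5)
The plan is to take for $\CZ$ the submodule generated by the harmonics, as in Howe's construction. Let $\overline{\CH}_{\sigma,\sigma'}$ be the image of $\CH_{\sigma,\sigma'}$ in $\CP/\CY^{(d)}$, and set
\[
\CZ:=U(\g\oplus\g')\cdot\overline{\CH}_{\sigma,\sigma'}=U(\g)U(\g')\cdot\overline{\CH}_{\sigma,\sigma'}\subseteq \CP/\CY^{(d)}.
\]
This is a $(\g\oplus\g',\tK\cdot\tKp)$-submodule, because $\CH_{\sigma,\sigma'}$ is $\tK\cdot\tKp$-stable. The three properties are then checked in order.

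\textbf{Property (1).} Apply Lemma~\ref{Lem: Howe quotient degree} to $\CN=\CN_\pi$. It gives $\delta(\CN_\pi)=d$, so $\CY^{(d)}\subseteq\CN_\pi$ and the quotient map $\CP/\CY^{(d)}\to\CP/\CN_\pi$ makes sense. Since $\deg\sigma=d+1$, the $\sigma$-isotypic part of $\CP/\CN_\pi$ in degree $d+1$ is nonzero. Among vectors of degree $d+1$ modulo $\CP^{(d)}$, the ones not killed by $\m'^{(0,2)}$ or $\m^{(0,2)}$ are pushed into lower degree. Hence they lie in $\CY^{(d)}$, and the nonzero $\sigma$-part of $\CP/\CN_\pi$ must come from $\CH_{\sigma,\sigma'}$, with $\sigma'=\theta_\CH(\sigma)$. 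In particular $\overline{\CH}_{\sigma,\sigma'}\neq0$, and even its image in $\CP/\CN_\pi$ is nonzero.

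\textbf{Property (2).} The image of $\CZ$ in $\CP/\CN_\pi\simeq\pi\otimes\Theta(\pi)$ is a nonzero $(\g\oplus\g')$-submodule. Since $\pi$ is irreducible, it has the form $\pi\otimes W$ for some submodule $W\subseteq\Theta(\pi)$. If $W\neq\Theta(\pi)$, then $\CP\to\pi\otimes\Theta(\pi)/W$ is a nonzero quotient. Because $\CH_{\sigma,\sigma'}$ maps to zero there, that quotient has no $\sigma$-type in degree $d+1$. I then want to contradict the minimality built into $\CN_\pi$: the quotient $\pi\otimes(\Theta(\pi)/W)$ still carries all of $\pi$, hence has the $\tK$-type $\sigma$, which is of degree $d+1$ and lowest degree. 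By Lemma~\ref{Lem: Howe quotient degree}(b) applied to the submodule $\CN=\ker\bigl(\CP\to\pi\otimes\Theta(\pi)/W\bigr)$, that $\sigma$-type must come from joint harmonics, which contradicts the vanishing. So $W=\Theta(\pi)$ and $\CZ\twoheadrightarrow\CP/\CN_\pi$.

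\textbf{Property (3), simultaneous cyclicity.} This is the main obstacle. By PBW and averaging over $\tK\cdot\tKp$, the $(\sigma\otimes\sigma')$-isotypic part of $\CZ$ is $U(\g)^K U(\g')^{K'}\overline{\CH}_{\sigma,\sigma'}$. Therefore
\[
M_\CZ=U(\g)^K U(\g')^{K'}e_0 .
\]
It then suffices to show $U(\g')^{K'}e_0\subseteq U(\g)^Ke_0$, and symmetrically. For this I would use the compact dual pairs $(K,M')$ and $(M,K')$, with $\g'\subseteq\m'$ and $\g\subseteq\m$. Modulo $\CY^{(d)}$, the operators in $\m'^{(0,2)}$ and $\m^{(0,2)}$ kill $e_0$, since its image is harmonic of minimal degree. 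Consequently each element of $U(\g')^{K'}$ acts on $e_0$ through its $\m'^{(2,0)}$-part together with $K'$-scalars; note $\m'^{(1,1)}\supseteq\mathfrak k$-commutant terms act on $\CH_{\sigma,\sigma'}$ by scalars. Howe's relation between $U(\m')^{K}$ and the image of $U(\g)^K$ on $\sigma$-isotypic vectors, namely the commutant of $K$ on $\CP$ generated by $\m'$, then lets one rewrite such an element as an element of $U(\g)^K$ applied to $e_0$. I would argue by induction on the $\sp^{(2,0)}$-degree. The step I expect to be delicate is controlling the error terms, which lie in lower degree and must be shown to fall into $\CY^{(d)}$ or into the span already obtained.
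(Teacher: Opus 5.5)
\textbf{There is a genuine gap.} Your choice of $\CZ$ is sound, and so is your reduction of (3) to the inclusion $U(\g')^{K'}e_0\subseteq U(\g)^Ke_0$. What is missing is the ingredient that makes the lemma true. The paper only cites this result as Howe's (4.8). What is needed is that, modulo $\CY^{(d)}$, the whole isotypic components are generated from the joint harmonics by one algebra alone. Write $\CP_\sigma$ for the $\sigma$-isotypic component under $\tK$ and $\CP_{\sigma'}$ for the $\sigma'$-isotypic component under $\tKp$. The facts are
$\CP_\sigma\subseteq U(\g')\CH_{\sigma,\sigma'}+\CY^{(d)}$ and $\CP_{\sigma'}\subseteq U(\g)\CH_{\sigma,\sigma'}+\CY^{(d)}$.

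These come from the see-saw identities $\m'=\g'+\m'^{(1,1)}+\m'^{(0,2)}$ and $\m=\g+\m^{(1,1)}+\m^{(0,2)}$, which you never use.
\begin{enumerate}
\item Since $\CP=U(\m'^{(2,0)})\CH(K)$, and $\m'^{(1,1)}\oplus\m'^{(0,2)}$ preserves $\CH(K)$, PBW gives $\CP=U(\g')\CH(K)$. Hence $\CP_\sigma=U(\g')\CH(K)_\sigma$, and symmetrically $\CP_{\sigma'}=U(\g)\CH(K')_{\sigma'}$.
\item The Fock-orthogonal complement of $\CH_{d+1}$ in $\CP_{d+1}$ is $\m'^{(2,0)}\CP_{d-1}+\m^{(2,0)}\CP_{d-1}$. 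This lies in $\CY^{(d)}$, because every element of $\m'^{(2,0)}$ is an element of $\g'$ plus operators that preserve or lower degree, and likewise for $\m$.
\end{enumerate}
Now $\CH(K)_\sigma$ and $\CH(K')_{\sigma'}$ lie in $\CP_{d+1}$. Taking isotypic parts and using the joint harmonic correspondence gives $\CH(K)_\sigma+\CH(K')_{\sigma'}\subseteq\CH_{\sigma,\sigma'}+\CY^{(d)}$.

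Each of your three steps breaks exactly where these facts are needed.
\begin{enumerate}
\item In (1), the claim that vectors not killed by $\m'^{(0,2)}$ or $\m^{(0,2)}$ are ``pushed into lower degree, hence lie in $\CY^{(d)}$'' is a non sequitur. $\CY^{(d)}$ is generated by applying $\g\oplus\g'$ to low-degree vectors, and the fact that a vector can be lowered says nothing about membership in it. Likewise, the claim that the $\sigma$-part of $\CP/\CN_\pi$ is already visible in degree $d+1$ requires $\CP_\sigma=U(\g')\CH(K)_\sigma$.
\item In (2), Lemma~\ref{Lem: Howe quotient degree}(b) as stated only says that $\theta_\CH(\sigma)$ occurs in $\CP/\CN$. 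It does not say $\CH_{\sigma,\sigma'}\not\subseteq\CN$, which is what your contradiction uses.
\item In (3), an element of $U(\g')^{K'}$ written inside $U(\m')$ does not act on $e_0$ ``through its $\m'^{(2,0)}$-part and $\tKp$-scalars''. The individual $\m'^{(1,1)}$-factors move $\CH_{\sigma,\sigma'}$ into other $\tKp$-types of $\CH(K)_\sigma$. Also, the fact that $U(\g)^K$ acts through the algebra generated by $\m'$ goes in the wrong direction for rewriting $\m'$-words applied to $e_0$ as elements of $U(\g)^K$. The error terms you leave open are precisely the content of the lemma.
\end{enumerate}

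Once the two facts are in place, the three properties are short and no induction on degree is needed.
\begin{enumerate}
\item If $\CH_{\sigma,\sigma'}\subseteq\CN_\pi$, then $\CP_\sigma\subseteq\CN_\pi$, contradicting $\sigma\in\CR(\tK,\CP/\CN_\pi)$. This gives (1).
\item The image of $\CZ$ in $\CP/\CN_\pi$ contains the image of $\CP_\sigma$, namely $\pi_\sigma\otimes\Theta(\pi)$, which generates $\pi\otimes\Theta(\pi)$. This gives (2).
\item For $u'\in U(\g')^{K'}$ one has $u'\overline{\CH}_{\sigma,\sigma'}\subseteq\overline{\CP_{\sigma'}}=U(\g)\overline{\CH}_{\sigma,\sigma'}$. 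Your own averaging argument then yields $u'e_0\in U(\g)^Ke_0$, and the symmetric argument gives (3).
\end{enumerate}
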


\begin{rremark}
The module $\CZ$ (and thus the associated objects $M_{\CZ}$ and $e_0$) depends on the choice of the lowest degree $\tK$ type $\sigma$. We omit this dependence in the notation.
\end{rremark}

\section{Lowest degree $K$-types of theta lifts}\label{Sec: LDK}

In this section, we study the lowest degree $\tKp$-types in full and small theta lifts.
We first show that the sets of lowest degree $\tKp$-types in the full and small theta lifts coincide.
\begin{thm}\label{Thm: LDK coincide}
Let $\pi \in \CR(\g,\tK,\omega)$.  Then
\[
 \LD(\tKp, \Theta(\pi))
 =\LD(\tKp, \theta(\pi))
 =\theta_{\CH}\bigl(\LD(\tK, \pi)\bigr).
\]
\end{thm}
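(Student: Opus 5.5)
We prove the two equalities by combining Corollary~\ref{Cor: LDK correspondence}, applied to two different quotients of $\CP$, with Lemma~\ref{Lem: joint harmonic correspondence}(2).

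\textbf{Step 1: the full lift.} Apply Corollary~\ref{Cor: LDK correspondence} with $\CN=\CN_\pi$, so that $\CP/\CN_\pi\simeq\pi\otimes\Theta(\pi)$. As a $\tK$-module this is a direct sum of copies of $\pi$, so $\CR(\tK,\CP/\CN_\pi)=\CR(\tK,\pi)$ and hence $\LD(\tK,\CP/\CN_\pi)=\LD(\tK,\pi)$. As a $\tKp$-module it is a direct sum of copies of $\Theta(\pi)$, since $\pi\neq0$. Hence $\LD(\tKp,\CP/\CN_\pi)=\LD(\tKp,\Theta(\pi))$. The corollary then gives
\[
\theta_{\CH}\bigl(\LD(\tK,\pi)\bigr)=\LD(\tKp,\Theta(\pi)).
\]

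\textbf{Step 2: the small lift.} Consider the quotient $\CP\twoheadrightarrow\pi\otimes\Theta(\pi)\twoheadrightarrow\pi\otimes\theta(\pi)$, and let $\CN'\supseteq\CN_\pi$ be its kernel. This is a proper $(\g\oplus\g',\tK\cdot\tKp)$-submodule. By the same reasoning as in Step 1, $\LD(\tK,\CP/\CN')=\LD(\tK,\pi)$ and $\LD(\tKp,\CP/\CN')=\LD(\tKp,\theta(\pi))$. Corollary~\ref{Cor: LDK correspondence} then yields
\[
\theta_{\CH}\bigl(\LD(\tK,\pi)\bigr)=\LD(\tKp,\theta(\pi)).
\]
Combining Steps 1 and 2 gives both equalities in the theorem.

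The main point to check is that Corollary~\ref{Cor: LDK correspondence} really applies to the second quotient. Its hypothesis only requires that $\CN'$ be a proper $(\g\oplus\g',\tK\cdot\tKp)$-submodule of $\CP$, and $\CN'$ is one. The minimum degree appearing in $\CP/\CN'$ is then governed by Lemma~\ref{Lem: Howe quotient degree} for this new $\CN'$.

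As a consistency check, Lemma~\ref{Lem: joint harmonic correspondence}(2) already says that $\theta_{\CH}(\sigma)$ is a lowest degree $\tKp$-type of $\theta(\pi)$. Also, $\theta(\pi)$ is a quotient of $\Theta(\pi)$, so its $\tKp$-types are among those of $\Theta(\pi)$. Together with the equality of degrees $\deg\sigma=\deg\theta_{\CH}(\sigma)$, this shows that the minimal degrees in $\Theta(\pi)$ and $\theta(\pi)$ agree, which is compatible with Steps 1 and 2.
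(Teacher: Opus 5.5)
Your proof is correct and is essentially the paper's argument: the paper likewise applies Corollary~\ref{Cor: LDK correspondence} to the quotient $Q_V=\pi\otimes V$ of $\CP$ for both $V=\Theta(\pi)$ and $V=\theta(\pi)$, using $\CR(\tK,Q_V)=\CR(\tK,\pi)$ and $\CR(\tKp,Q_V)=\CR(\tKp,V)$. One small caveat about your closing ``consistency check'': the facts listed there only show that the minimal degree in $\Theta(\pi)$ is at most the minimal degree in $\theta(\pi)$, and the reverse inequality needs Lemma~\ref{Lem: Howe quotient degree}(a) for $\CN_\pi$; that remark plays no role in your actual proof.
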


\begin{proof}
Let $V$ be either $\Theta(\pi)$ or $\theta(\pi)$, and let $Q_V=\pi\otimes V$ be the corresponding quotient of $\CP$.  
Since $\tK$ only acts on the first factor and $\tKp$ only acts on the second, we have
\[
 \CR(\tK, Q_V)= \CR(\tK, \pi),
 \qquad
 \CR(\tKp, Q_V)= \CR(\tKp, V).
\]
Therefore,
\[
 \LD(\tK, Q_V)= \LD(\tK, \pi),
 \qquad
 \LD(\tKp, Q_V)= \LD(\tKp, V).
\]

Applying Corollary~\ref{Cor: LDK correspondence} to $Q_V$, we obtain
\[
 \LD(\tKp, V)
 =\theta_{\CH}\bigl(\LD(\tK, \pi)\bigr).
\]
The theorem follows from applying this to both choices of $V$.
\end{proof}

Now we turn to the multiplicities of lowest degree $K$-types in the full and small theta lifts. We will use the following two lemmas.

\begin{lemt}\label{Lem: rank eq}
Let $E$ and $F$ be nonzero finite-dimensional vector spaces, let
$A\subseteq\End(E)$ and $B\subseteq\End(F)$ be subalgebras, and let
$x\in E\otimes F$.  If
\begin{equation*}
 E\otimes F=(A\otimes 1)x=(1\otimes B)x,
\end{equation*}
then $\dim E=\dim F$.
\end{lemt}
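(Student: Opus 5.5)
The strategy is to exploit the rank of $x$ viewed as a linear map. Identify $E\otimes F$ with $\Hom(F^*,E)$, so that $x$ becomes a linear map $\hat x\colon F^*\to E$. Write $r$ for the rank of $x$, that is, the smallest number of pure tensors needed to write $x=\sum_{i=1}^r e_i\otimes f_i$. With the $e_i$ and the $f_i$ each linearly independent, the image of $\hat x$ is the span of $e_1,\dots,e_r$.

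The key observation is that left multiplication by $A\otimes 1$ cannot enlarge the "$F$-side support" of $x$. For any $a\in A$ we have
\[
(a\otimes 1)x=\sum_{i=1}^r ae_i\otimes f_i\in E\otimes F_0,\qquad F_0:=\Span_{\BC}\{f_1,\dots,f_r\}.
\]
Hence $(A\otimes 1)x\subseteq E\otimes F_0$. The hypothesis $(A\otimes 1)x=E\otimes F$ forces $E\otimes F_0=E\otimes F$. Since $E\neq 0$, this gives $F_0=F$, so $r\ge\dim F$. Trivially $r\le\dim F$, so $r=\dim F$.

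The argument is symmetric. Using $(1\otimes B)x\subseteq E_0\otimes F$ with $E_0=\Span_{\BC}\{e_1,\dots,e_r\}$ and $F\neq0$, we get $E_0=E$, so $r=\dim E$. Therefore $\dim E=r=\dim F$.

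There is no real obstacle here. The only point needing care is that the rank decomposition gives linearly independent $f_i$, so that $\dim F_0=r$. The subalgebra structure of $A$ and $B$ is not even used; it suffices that they are subsets of $\End(E)$ and $\End(F)$. Nonemptiness is also not an issue, since if $A$ were empty the hypothesis would fail.
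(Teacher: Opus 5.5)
Your proof is correct and is essentially the paper's argument. The paper shows that $T_x\colon F^*\to E$, $\lambda\mapsto(1\otimes\lambda)x$, is injective, since $1\otimes\lambda$ annihilates $(A\otimes 1)x=E\otimes F$ whenever $\lambda\in\ker T_x$; this is the dual form of your observation that $F_0=F$ (indeed $\ker T_x=F_0^{\perp}$), and both arguments then finish by symmetry.
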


\begin{proof}
  By symmetry, it suffices to prove that $\dim F\leq\dim E$.
  Consider the following map associated to $x$,
\[
  T_x:F^*\longrightarrow E,
  \qquad T_x(\lambda)=(1\otimes\lambda)x.
\]
If $\lambda\in\ker T_x$, then for every $a\in A$,
\[
 (1\otimes\lambda)((a\otimes1)x)=aT_x(\lambda)=0.
\]
Since $(A\otimes1)x = E\otimes F$, we have $1\otimes\lambda$ vanishes on all of $E\otimes F$, and hence $\lambda=0$.  
Thus $T_x$ is injective and $\dim F\leq\dim E$.
\end{proof}

\begin{lemt}\label{Lem: theta cyclic}
For $\pi \in \CR(\g,\tK,\omega)$, let $V$ be either $\Theta(\pi)$ or $\theta(\pi)$, and let $Q_V :=\pi \otimes V$.  
Take $\sigma, \CZ, e_0, M_{\CZ}$ as in Lemma~\ref{Lem: Z existence}.
The quotient maps
\[
  \CZ \twoheadrightarrow \CP/\CN_{\pi} \cong \pi \otimes \Theta(\pi)
  \twoheadrightarrow \pi \otimes \theta(\pi)
\]
induce surjections
\[
  q_V: M_{\CZ}
  \twoheadrightarrow
  M_V:=\Hom_{\tK\cdot \tKp}(\CH_{\sigma,\sigma'},Q_V).
\]
Denote by $e_V$ the image of $e_0$ under $q_V$. Then we have $e_V$ is nonzero and
\[
  M_V
  =\mathcal U(\g)^K(e_V)
  =\mathcal U(\g')^{K'}(e_V).
\]
\end{lemt}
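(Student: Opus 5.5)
The plan is to push the cyclicity of Lemma~\ref{Lem: Z existence}(3) down along the quotient maps, using that $\CH_{\sigma,\sigma'}$ is a finite-dimensional irreducible $\tK\cdot\tKp$-module (Lemma~\ref{Lem: joint harmonic correspondence}) and that compact groups act completely reducibly. Write $p_V:\CZ\twoheadrightarrow Q_V$ for the composite surjection; it exists by Lemma~\ref{Lem: Z existence}(2) followed by the projection $\pi\otimes\Theta(\pi)\twoheadrightarrow\pi\otimes\theta(\pi)$ when $V=\theta(\pi)$. Define $q_V(\phi):=p_V\circ\phi$.

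\textbf{Step 1 (surjectivity of $q_V$).} Since $p_V$ is $\tK\cdot\tKp$-equivariant and the modules involved are locally finite, $p_V$ restricts to a surjection between $(\sigma\otimes\sigma')$-isotypic components $\CZ_{\sigma\otimes\sigma'}\twoheadrightarrow (Q_V)_{\sigma\otimes\sigma'}$. The isotypic component of a locally finite module is $\CH_{\sigma,\sigma'}\otimes\Hom_{\tK\cdot\tKp}(\CH_{\sigma,\sigma'},-)$, so $\Hom_{\tK\cdot\tKp}(\CH_{\sigma,\sigma'},-)$ is exact on such modules. This gives the surjectivity of $q_V$.

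\textbf{Step 2 (equivariance).} For $X\in\mathcal U(\g)^K$, the operator $X$ commutes with $\tK$, since $\tK$ acts through $K$ by the adjoint action and $\tK$ is connected modulo the kernel. It also commutes with $\tKp$, because $\g$ and $\tKp$ commute. Hence $X$ acts on $\Hom_{\tK\cdot\tKp}(\CH_{\sigma,\sigma'},W)$ by $\phi\mapsto X\circ\phi$. More carefully: the $K$-invariance of $X$ under $\mathrm{Ad}$ gives $k X k^{-1}=X$ for $k\in\tK$. Because $p_V$ is a $(\g\oplus\g')$-map, we get $q_V(X\phi)=Xq_V(\phi)$, and the same holds for $\mathcal U(\g')^{K'}$. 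Applying $q_V$ to $M_{\CZ}=\mathcal U(\g)^K e_0=\mathcal U(\g')^{K'}e_0$ and using Step 1 then yields $M_V=\mathcal U(\g)^K e_V=\mathcal U(\g')^{K'} e_V$.

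\textbf{Step 3 ($e_V\neq 0$).} This is the one point needing an argument. Since $\sigma\in\LD(\tK,\pi)$ and $\sigma'=\theta_{\CH}(\sigma)$, Theorem~\ref{Thm: LDK coincide} shows that $\sigma'$ occurs in $V$. Therefore $\sigma\otimes\sigma'$ occurs in $Q_V=\pi\otimes V$, and $M_V\neq0$. If $e_V=0$, Step 2 would give $M_V=\mathcal U(\g)^K\cdot 0=0$, a contradiction. Hence $e_V\neq0$.

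The main subtlety I expect is in Step 2: one must check that the actions of $\mathcal U(\g)^K$ and $\mathcal U(\g')^{K'}$ on the Hom-spaces are well defined, i.e.\ that these elements commute with the full $\tK\cdot\tKp$ action. If $\tK$ is disconnected, the invariance should be read as invariance under $\mathrm{Ad}(K)$, which is how $\mathcal U(\g)^K$ is defined. Granting this, the proof is the formal transfer described above.
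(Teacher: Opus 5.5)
Your proposal is correct and follows the paper's proof. Both push the cyclicity of $M_{\CZ}$ through the $\mathcal U(\g)^K\times\mathcal U(\g')^{K'}$-equivariant map $q_V$, and then get $e_V\neq 0$ from $M_V\neq 0$; the paper takes $M_V\neq0$ from Lemma~\ref{Lem: joint harmonic correspondence}(2) rather than Theorem~\ref{Thm: LDK coincide}, which makes no real difference. Your added justifications of surjectivity and of the Hom-space actions are fine, but drop the aside that $\tK$ is ``connected modulo the kernel'': it is false in general (e.g.\ $K=\RO(p)\times\RO(q)$) and unnecessary, since $kXk^{-1}=\mathrm{Ad}(k)X$ already gives commutation for $X\in\mathcal U(\g)^K$.
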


\begin{proof}
  Note that $q_V$ is $U(\g)^K \times U(\g')^{K'}$ equivariant. Therefore, according to Lemma~\ref{Lem: Z existence}, we have
  \[
  M_V
  =\mathcal U(\g)^K(e_V)
  =\mathcal U(\g')^{K'}(e_V).
  \]
  Since $M_V$ is non-zero by Lemma~\ref{Lem: joint harmonic correspondence}, we have $e_V \neq 0$.
\end{proof}

\begin{rremark}
    A statement similar to Lemma~\ref{Lem: theta cyclic} already appears in Howe's original proof of the main theorem, see \cite[P547]{Howe1989TranscendingClassicalInvTheory}. We record it in the present form for clarity and later use.
\end{rremark}

We can now show that every lowest degree $\tKp$-type occurs with the same multiplicity in the full and small theta lifts.

\begin{thm}\label{Thm: LDK multiplicity eq}
  For any $\pi$ in $\CR(\g,\tK,\omega)$, let $\sigma$ be a lowest degree $\tK$-type of $\pi$, and $\sigma' := \theta_{\CH}(\sigma)$, then we have
    \[
    \dim \Hom_{\tK}(\sigma,\pi)
    = \dim \Hom_{\tKp}(\sigma',\Theta(\pi))
    = \dim \Hom_{\tKp}(\sigma',\theta(\pi)).
    \]
\end{thm}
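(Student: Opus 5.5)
The plan is to apply Lemma~\ref{Lem: rank eq} to the module $M_V$ from Lemma~\ref{Lem: theta cyclic}, once for $V=\Theta(\pi)$ and once for $V=\theta(\pi)$.

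First identify $M_V$ as a tensor product. Write $H:=\CH_{\sigma,\sigma'}\simeq\sigma\otimes\sigma'$ (Lemma~\ref{Lem: joint harmonic correspondence}) and $Q_V=\pi\otimes V$. Since $\tK$ acts only on the first factor and $\tKp$ only on the second, we get
\[
 M_V=\Hom_{\tK\cdot\tKp}(\sigma\otimes\sigma',\pi\otimes V)\simeq \Hom_{\tK}(\sigma,\pi)\otimes\Hom_{\tKp}(\sigma',V)=:E\otimes F_V .
\]
Both factors are finite-dimensional: $\pi$ is admissible, and $V$ has finite length, so it is admissible too. Moreover $E\neq 0$ because $\sigma$ is a $\tK$-type of $\pi$, and $F_V\neq0$ by Theorem~\ref{Thm: LDK coincide}, since $\sigma'\in\LD(\tKp,V)$.

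Next compare the algebra actions with the tensor decomposition. The algebra $\mathcal U(\g)^K$ commutes with $\tK$, so it acts on $E=\Hom_{\tK}(\sigma,\pi)$ by post-composition. It acts trivially on the $V$ factor, because $\g$ acts on $\pi\otimes V$ only through $\pi$. Hence its action on $M_V$ is through $A\otimes1$, where $A\subseteq\End(E)$ is the image of $\mathcal U(\g)^K$. Symmetrically, $\mathcal U(\g')^{K'}$ acts through $1\otimes B$ with $B\subseteq\End(F_V)$.

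Now Lemma~\ref{Lem: theta cyclic} gives
\[
 E\otimes F_V=(A\otimes1)e_V=(1\otimes B)e_V .
\]
Lemma~\ref{Lem: rank eq} then yields $\dim E=\dim F_V$, that is, $\dim\Hom_{\tK}(\sigma,\pi)=\dim\Hom_{\tKp}(\sigma',V)$. Taking $V=\Theta(\pi)$ and $V=\theta(\pi)$ gives both equalities in the theorem.

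The main obstacle is the identification step: checking carefully that the $\mathcal U(\g)^K$ and $\mathcal U(\g')^{K'}$ actions on $M_V$, as used in Lemmas~\ref{Lem: Z existence} and~\ref{Lem: theta cyclic}, really factor as $A\otimes1$ and $1\otimes B$ under the isomorphism $M_V\simeq E\otimes F_V$. This requires that the isomorphism $\CP/\CN_\pi\simeq\pi\otimes\Theta(\pi)$ is one of $(\g\oplus\g')$-modules, which is given by Howe's theorem. It also requires fixing the isomorphism $H\simeq\sigma\otimes\sigma'$ once and for all; the actions only post-compose with the target, so that choice does not affect the argument. The remaining steps are formal.
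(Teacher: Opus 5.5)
Your proposal is correct and follows the paper's proof: you identify $M_V\simeq\Hom_{\tK}(\sigma,\pi)\otimes\Hom_{\tKp}(\sigma',V)$, use the simultaneous cyclicity of $e_V$ under $\mathcal U(\g)^K$ and $\mathcal U(\g')^{K'}$ from Lemma~\ref{Lem: theta cyclic}, and apply Lemma~\ref{Lem: rank eq} for both choices of $V$. The details you add — finite-dimensionality, nonvanishing of both factors, and the factorization of the two actions as $A\otimes 1$ and $1\otimes B$ — are exactly the points the paper leaves implicit.
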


\begin{proof}
    Let $V$ be either $\Theta(\pi)$ or $\theta(\pi)$, and denote $Q_V :=\pi \otimes V$.
    Note that
    \[
     M_V = \Hom_{\tK\cdot \tKp}(\CH_{\sigma,\sigma'},Q_V) \simeq \Hom_{\tK}(\sigma,\pi)\otimes \Hom_{\tKp}(\sigma',V),
    \]
    where all these Hom spaces are nonzero and finite-dimensional.
    Now using Lemmas~\ref{Lem: rank eq} and \ref{Lem: theta cyclic}, we obtain
    \[
     \dim \Hom_{\tK}(\sigma,\pi) =  \dim \Hom_{\tKp}(\sigma',V)
    \]
    for both $V=\Theta(\pi)$ and $V=\theta(\pi)$.
\end{proof}

\section{Proof of main theorems}\label{Sec: Proof main}

\subsection{\JH multiplicity of the small theta lift}

We are now ready to prove that the \JH multiplicity of $\theta(\pi)$ in $\Theta(\pi)$ is one.

\begin{thm}\label{Thm: JH small theta}
  For every $\pi$ in $\CR(\g,\tK,\omega)$, we have 
  \[
  [\Theta(\pi) : \theta(\pi)] = 1.
  \]
\end{thm}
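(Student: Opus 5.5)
The idea is to count the lowest degree $\tKp$-types along a Jordan--H\"older filtration. Fix a lowest degree $\tK$-type $\sigma$ of $\pi$ and put $\sigma':=\theta_{\CH}(\sigma)$. By Theorem~\ref{Thm: LDK coincide}, $\sigma'$ is a lowest degree $\tKp$-type of both $\Theta(\pi)$ and $\theta(\pi)$. By Theorem~\ref{Thm: LDK multiplicity eq},
\[
 m:=\dim\Hom_{\tKp}(\sigma',\Theta(\pi))=\dim\Hom_{\tKp}(\sigma',\theta(\pi))\geq 1 .
\]

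Now take a composition series of the finite-length $(\g',\tKp)$-module $\Theta(\pi)$. Since $\tKp$ is compact, taking $\tKp$-isotypic components is exact on locally finite modules. Hence the $\sigma'$-multiplicity is additive along the series:
\[
 m=\dim\Hom_{\tKp}(\sigma',\Theta(\pi))=\sum_{\rho}[\Theta(\pi):\rho]\cdot\dim\Hom_{\tKp}(\sigma',\rho),
\]
where the sum runs over the irreducible constituents $\rho$ of $\Theta(\pi)$.

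The term with $\rho=\theta(\pi)$ contributes $[\Theta(\pi):\theta(\pi)]\cdot m$. Every other term is nonnegative. Since $m\geq1$, this forces $[\Theta(\pi):\theta(\pi)]\leq 1$. Moreover, any other constituent $\rho\not\simeq\theta(\pi)$ cannot contain $\sigma'$, which is the content of Corollary~\ref{Cor:intro-characterization}.

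For the lower bound, $\theta(\pi)$ is a quotient of $\Theta(\pi)$ by Howe's theorem, so $[\Theta(\pi):\theta(\pi)]\geq1$. Combining the two bounds gives $[\Theta(\pi):\theta(\pi)]=1$.

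I do not expect a serious obstacle, since the substantive input is Theorem~\ref{Thm: LDK multiplicity eq}. The one point to check carefully is the additivity of $\dim\Hom_{\tKp}(\sigma',-)$ along the composition series. This needs each $\tKp$-isotypic component of $\Theta(\pi)$ to be finite-dimensional, which holds because $\Theta(\pi)$ has finite length and each irreducible constituent is admissible. The multiplicity $m$ is finite as well, as already noted in the proof of Theorem~\ref{Thm: LDK multiplicity eq}.
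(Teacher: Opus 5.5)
Your proposal is correct and is essentially the paper's argument. You fix a lowest degree $\tKp$-type, use Theorem~\ref{Thm: LDK multiplicity eq} to get equal multiplicities in $\Theta(\pi)$ and $\theta(\pi)$, and use exactness of $\tKp$-isotypic components along a composition series to force $[\Theta(\pi):\theta(\pi)]\le 1$; your explicit additivity and finiteness bookkeeping just spells out the paper's one-line inequality.
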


\begin{proof}
  Assume $[\Theta(\pi) : \theta(\pi)] = t$, obviously we have $t \geq 1$.
  Take a lowest degree $\tKp$-type $\tau$ in $\theta(\pi)$. 
  Using Theorem~\ref{Thm: LDK multiplicity eq}, we have 
  \[
  \dim \Hom_{\tKp}(\tau,\theta(\pi))
 = \dim \Hom_{\tKp}(\tau,\Theta(\pi))
 \geq t \cdot \dim \Hom_{\tKp}(\tau,\theta(\pi)),
  \]
  where the last inequality follows from the fact that $\tKp$ is compact.
  Thus we have $t=1$
\end{proof}

\subsection{The smooth version}\label{Subsec: smooth vertion}
Recall that we have fixed the smooth Weil representation $\omega^\infty$ in the introduction. Denote by $\CR(\tG,\omega^\infty)$ the set of isomorphism classes of irreducible Casselman--Wallach representations $\Pi$ of $\tG$ such that
\[
    \Hom_{\tG}(\omega^\infty,\Pi)\neq 0,
\]
where $\Hom_{\tG}$ denotes the space of continuous $\tG$-equivariant linear maps. Define $\CR(\tGp,\omega^\infty)$ similarly.
For $\Pi\in\CR(\tG,\omega^\infty)$, consider
\[
    \CN_\Pi
    :=
    \bigcap_{\phi\in\Hom_{\tG}(\omega^\infty,\Pi)}
    \ker\phi,
\]
which is a closed $\tG\cdot\tGp$-invariant subspace of $\omega^\infty$.
We have the following theorem for smooth theta lifting.

\begin{thm}[{\cite[Theorems 1 and 1A]{Howe1989TranscendingClassicalInvTheory}}]
For every $\Pi\in\CR(\tG,\omega^\infty)$, there is an isomorphism
\[
    \omega^\infty/\CN_\Pi
    \simeq
    \Pi\otimeshat\Theta^\infty(\Pi)
\]
of smooth $\tG\cdot\tGp$-representations, where $\Theta^\infty(\Pi)$ is a Casselman--Wallach representation of $\tGp$. In particular, $\Theta^\infty(\Pi)$ has finite length. 

Moreover, $\Theta^\infty(\Pi)$ has a unique irreducible quotient $\theta^\infty(\Pi)$, and the correspondence $\Pi \mapsto \theta^{\infty}(\Pi)$ defines a bijection from $\CR(\tG,\omega^{\infty})$ to $\CR(\tGp, \omega^{\infty})$.
\end{thm}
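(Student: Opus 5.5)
The plan is to deduce the smooth statement from the algebraic theorem of Howe recalled in the introduction, using the Casselman--Wallach equivalence between Harish-Chandra modules of finite length and Casselman--Wallach representations. Write $\Pi_K$ for the $(\g,\tK)$-module of $\tK$-finite vectors of $\Pi$. The $\tU$-finite vectors of $\omega^\infty$ form exactly $\CP$, and $\CP$ is dense in $\omega^\infty$. I will match the three data on both sides: the sets $\CR$, the kernels $\CN$, and the quotients.

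\textbf{Step 1: identify the Hom spaces.} The first step is to construct a natural isomorphism
\[
\Hom_{\tG}(\omega^\infty,\Pi)\xrightarrow{\ \sim\ }\Hom_{\g,\tK}(\CP,\Pi_K), \qquad \phi\mapsto\phi|_{\CP}.
\]
This map is well defined because $\tU$-finite vectors are $\tK$-finite. It is injective by density of $\CP$. Surjectivity is the main obstacle: a $(\g,\tK)$-map $\CP\to\Pi_K$ must extend continuously to $\omega^\infty$. I would argue as follows.
\begin{itemize}
\item Such a map factors through $\CP/\CN_{\pi}\simeq \pi\otimes\Theta(\pi)$ with $\pi=\Pi_K$. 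This is a finite-length $(\g\oplus\g',\tK\cdot\tKp)$-module, since $\Theta(\pi)$ has finite length.
\item Let $\overline{\CN_\pi}$ be the closure of $\CN_\pi$ in $\omega^\infty$. I would show that $\omega^\infty/\overline{\CN_\pi}$ is a Casselman--Wallach representation of $\tG\times\tGp$ whose Harish-Chandra module is $\CP/\CN_\pi$. For this I would use that $\omega^\infty$ has moderate growth and that a quotient of it with finite-length Harish-Chandra module is the Casselman--Wallach globalization.
\item Given this, the automatic continuity part of Casselman--Wallach theory extends any morphism of Harish-Chandra modules to a continuous map. In particular, the map $\pi\otimes\Theta(\pi)\to\pi$ (pairing with a functional on the finite-length $\Theta(\pi)$) extends continuously.
\end{itemize}
As a consequence, $\CR(\tG,\omega^\infty)$ is identified with $\CR(\g,\tK,\omega)$ via $\Pi\mapsto\Pi_K$.

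\textbf{Step 2: identify the kernels and the quotient.} By Step 1, $\CN_\Pi\cap\CP=\CN_\pi$, and $\CN_\Pi=\overline{\CN_\pi}$. Hence $\omega^\infty/\CN_\Pi$ is the Casselman--Wallach globalization of $\pi\otimes\Theta(\pi)$. Define $\Theta^\infty(\Pi)$ to be the Casselman--Wallach globalization of $\Theta(\pi)$; it has finite length because $\Theta(\pi)$ does. The globalization of an exterior tensor product is the completed projective tensor product of the globalizations. This gives
\[
\omega^\infty/\CN_\Pi\simeq\Pi\otimeshat\Theta^\infty(\Pi).
\]

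\textbf{Step 3: unique quotient and bijection.} The Casselman--Wallach functor is an exact equivalence of categories. Irreducible quotients of $\Theta^\infty(\Pi)$ therefore correspond to irreducible quotients of $\Theta(\pi)$, so there is exactly one, namely the globalization $\theta^\infty(\Pi)$ of $\theta(\pi)$. Applying Step 1 on the $\tGp$ side identifies $\CR(\tGp,\omega^\infty)$ with $\CR(\g',\tKp,\omega)$. The bijection $\Pi\mapsto\theta^\infty(\Pi)$ is then the algebraic bijection $\pi\mapsto\theta(\pi)$ transported through these identifications.

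The real work is in the second bullet of Step 1: showing that the closure of $\CN_\pi$ has the expected quotient, which is where Howe's original analytic arguments enter.
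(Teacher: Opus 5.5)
The paper does not prove this statement; it quotes it from Howe. Judged on its own terms, your argument has a genuine error at its central step.

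\textbf{The error in Step 1.} The isomorphism $\Hom_{\tG}(\omega^\infty,\Pi)\to\Hom_{\g,\tK}(\CP,\Pi_K)$ is false as soon as $\Theta(\pi)$ is infinite-dimensional. Every $(\g,\tK)$-map $\CP\to\pi$ factors through $\CP/\CN_\pi\simeq\pi\otimes\Theta(\pi)$. As a $(\g,\tK)$-module this is a direct sum of $\dim\Theta(\pi)$ copies of $\pi$, so $\Hom_{\g,\tK}(\CP,\pi)\simeq\Theta(\pi)^*$ is the full algebraic dual. A map that extends continuously to $\omega^\infty$, by contrast, must be dominated by a seminorm of $\omega^\infty$. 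Concretely, take the pair $(\RO(1),\Sp_2(\BR))$, with $\CP=\BC[z]$ and $\pi$ the character of $\tK$ on even polynomials. The map sending $z^{2k}\mapsto(2k)!$ and odd monomials to $0$ is a $(\g,\tK)$-map. It is not continuous, because a continuous functional on $\omega^\infty$ has at most polynomial growth in $k$ on the unit vectors $z^k/\|z^k\|$.

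The misstep is the appeal to automatic continuity in your third bullet. The map $v\otimes t\mapsto\lambda(t)v$ is not $\g'$-equivariant, and for $\tG$ alone $\pi\otimes\Theta(\pi)$ has infinite multiplicity, so Casselman--Wallach theory says nothing about it. Only functionals $\lambda$ that are continuous on the globalization of $\Theta(\pi)$ extend.

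\textbf{What this leaves unproved.} The identifications you draw from Step 1 are therefore unsupported: $\CN_\Pi\cap\CP=\CN_\pi$, $\CN_\Pi=\overline{\CN_\pi}$, and $\Theta^\infty(\Pi)$ being the globalization of $\Theta(\pi)$. All of them amount to $\overline{\CN_\pi}\cap\CP=\CN_\pi$, which is exactly your second bullet, and you leave it open. Moderate growth plus Casselman--Wallach only tell you that the Harish-Chandra module of $\omega^\infty/\overline{\CN_\pi}$ is $\CP/(\overline{\CN_\pi}\cap\CP)$. That is a quotient of $\pi\otimes\Theta(\pi)$ which could a priori be proper, or even zero. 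So even $\CR(\tG,\omega^\infty)\simeq\CR(\g,\tK,\omega)$ is not established. The paper itself, in proving its smooth multiplicity-one theorem, uses only a surjection $\Theta(\pi)\twoheadrightarrow\Theta^\infty(\Pi)^{\mathrm{HC}}$.

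\textbf{How to repair it.} The statement as formulated does not need these stronger claims, and the argument closes once you drop them.
\begin{enumerate}
\item Work directly with $Q:=\omega^\infty/\CN_\Pi$. This is a smooth Fr\'echet representation of moderate growth of $\tG\cdot\tGp$: $\CN_\Pi$ is closed, and it is $\tGp$-stable because $\tGp$ commutes with $\tG$.
\item Restricting continuous maps to $\CP$ gives the easy inclusion $\CN_\pi\subseteq\CN_\Pi\cap\CP$. Hence the image $\CP_Q$ of $\CP$ in $Q$ is a quotient of $\pi\otimes\Theta(\pi)$, so $\CP_Q=\pi\otimes\Theta_1$ for some quotient $\Theta_1$ of $\Theta(\pi)$. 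It is dense, and nonzero because $\Pi\in\CR(\tG,\omega^\infty)$.
\item $\CP_Q$ is admissible and dense, and the $\tK\cdot\tKp$-isotypic projections are continuous. So each isotypic component of $Q$ lies in the closure of the finite-dimensional space $(\CP_Q)_\gamma$, hence equals it. Thus $\CP_Q$ is the whole Harish-Chandra module of $Q$.
\item Casselman--Wallach then gives $Q\simeq\Pi\otimeshat\Theta^\infty(\Pi)$, with $\Theta^\infty(\Pi)$ the globalization of $\Theta_1$.
\item Every irreducible quotient of $\Theta_1$ is one of $\Theta(\pi)$. So $\Theta^\infty(\Pi)$ has a unique irreducible quotient $\theta^\infty(\Pi)$, whose Harish-Chandra module is $\theta(\pi)$.
\item $\theta^\infty(\Pi)$ lies in $\CR(\tGp,\omega^\infty)$: compose $\omega^\infty\twoheadrightarrow\Pi\otimeshat\theta^\infty(\Pi)$ with $\mu\otimes1$ for a nonzero continuous functional $\mu$ on $\Pi$.
\item Injectivity of $\Pi\mapsto\theta^\infty(\Pi)$ follows from the algebraic bijection. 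Surjectivity follows by running the same argument on the $\tGp$ side, using that the algebraic correspondence is symmetric.
\end{enumerate}
This proves the theorem with $\Theta^\infty(\Pi)^{\mathrm{HC}}$ a nonzero quotient of $\Theta(\pi)$, which is precisely what the paper uses later.
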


Here $\otimeshat$ represents the completed projective tensor product. We call $\Theta^\infty(\Pi)$ and $\theta^\infty(\Pi)$ the smooth full and small theta lifts of $\Pi$, respectively.
The following theorem is the smooth analogue of Theorem~\ref{Thm: JH small theta}.

\begin{thm}\label{Thm: smooth JH small theta}
For every $\Pi\in\CR(\tG,\omega^\infty)$, we have
\[
    [\Theta^\infty(\Pi):\theta^\infty(\Pi)]=1.
\]
\end{thm}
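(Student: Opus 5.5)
The plan is to reduce Theorem~\ref{Thm: smooth JH small theta} to the algebraic Theorem~\ref{Thm: JH small theta} by passing to Harish-Chandra modules. The key input is the comparison between the smooth and algebraic theta correspondences, which Howe proves in the course of establishing Theorems 1 and 1A of \cite{Howe1989TranscendingClassicalInvTheory}. Let $\pi := \Pi_{\tK}$ be the $(\g,\tK)$-module of $\tK$-finite vectors of $\Pi$. By Casselman--Wallach, taking $\tK$-finite vectors is an exact equivalence from Casselman--Wallach representations of $\tG$ to finite-length Harish-Chandra modules. Its quasi-inverse is the Casselman--Wallach globalization.

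The steps, in order, are as follows.

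First, show that $\Pi\mapsto\pi$ identifies $\CR(\tG,\omega^\infty)$ with $\CR(\g,\tK,\omega)$. A continuous $\tG$-map $\omega^\infty\to\Pi$ restricts on $\tU$-finite vectors to a $(\g,\tK)$-map $\CP\to\pi$, since $\tU$-finite vectors are $\tK$-finite. Conversely, Howe shows that every $(\g,\tK)$-map $\CP\to\pi$ extends continuously, because $\CP$ is dense in $\omega^\infty$ and $\Pi$ is the canonical globalization.

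Second, show that
\[
(\Theta^\infty(\Pi))_{\tKp}\simeq\Theta(\pi), \qquad (\theta^\infty(\Pi))_{\tKp}\simeq\theta(\pi).
\]
Concretely, one checks that $\CN_\Pi$ is the closure of $\CN_\pi$ in $\omega^\infty$ and that $\CN_\Pi\cap\CP=\CN_\pi$. Then the $\tK\cdot\tKp$-finite vectors of $\omega^\infty/\CN_\Pi\simeq\Pi\otimeshat\Theta^\infty(\Pi)$ are identified with $\CP/\CN_\pi\simeq\pi\otimes\Theta(\pi)$. Taking $\sigma$-isotypic parts on the $\tK$ side for a fixed $\tK$-type $\sigma$ of $\pi$ gives $\Hom_{\tK}(\sigma,\pi)\otimes\Theta^\infty(\Pi)_{\tKp}\simeq\Hom_{\tK}(\sigma,\pi)\otimes\Theta(\pi)$ as $(\g',\tKp)$-modules, and we cancel the finite-dimensional multiplicity space. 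The unique irreducible quotients then correspond under the equivalence, which gives the statement for $\theta$.

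Third, conclude. The functor $(\cdot)_{\tKp}$ is exact and faithful on Casselman--Wallach representations, and it sends irreducibles to irreducibles. Hence it preserves Jordan--H\"older series and the multiplicities of irreducible constituents. Therefore
\[
[\Theta^\infty(\Pi):\theta^\infty(\Pi)]=[\Theta(\pi):\theta(\pi)],
\]
and the right-hand side equals $1$ by Theorem~\ref{Thm: JH small theta}.

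The main obstacle is the second step: identifying the Harish-Chandra module of the smooth full theta lift with the algebraic full lift. This requires controlling the closure of $\CN_\pi$ and the $\tK$-finite vectors of a completed projective tensor product. I would not reprove this but invoke the comparison established in Howe's proof of Theorems 1 and 1A, where the smooth statement is deduced from the algebraic one exactly in this manner. If needed, I would supplement this with the automatic continuity theorem of Casselman--Wallach.
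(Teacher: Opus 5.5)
Your overall strategy is the paper's: pass to Harish-Chandra modules and invoke Theorem~\ref{Thm: JH small theta}. But Step~2 asserts more than you can justify, and the justification offered in Step~1 is false.

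\textbf{The extension claim in Step~1 fails.} Any $(\g,\tK)$-map $\CP\to\pi$ factors through $\CP/\CN_\pi\simeq\pi\otimes\Theta(\pi)$. So $\Hom_{\g,\tK}(\CP,\pi)$ is the full algebraic dual of $\Theta(\pi)$. A continuous $\tG$-map $\omega^\infty\to\Pi$ instead factors through $\Pi\otimeshat\Theta^\infty(\Pi)$, and so corresponds to a continuous functional on $\Theta^\infty(\Pi)$. Such functionals grow at most polynomially along $\tKp$-types. Hence, when $\Theta(\pi)$ is infinite-dimensional, most algebraic maps $\CP\to\pi$ do not extend.

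\textbf{Consequently Step~2 is unsupported.} Only the inclusion $\CN_\pi\subseteq\CN_\Pi\cap\CP$ is automatic. The reverse inclusion amounts to the following: the finite-rank maps $\CP_{\sigma\otimes\sigma'}\to\pi_\sigma\otimes\Theta(\pi)_{\sigma'}$ must be continuous in the topology induced from $\omega^\infty$. The $\tK\cdot\tKp$-isotypic pieces $\CP_{\sigma\otimes\sigma'}$ are in general infinite-dimensional, so this is a real condition. Density does not give it. Casselman--Wallach automatic continuity does not apply, since $\CP$ is not a finite-length Harish-Chandra module. Howe's Theorems 1 and 1A, as recalled in the paper, do not assert it either. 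So the isomorphism $\Theta^\infty(\Pi)_{\tKp}\simeq\Theta(\pi)$ is not established.

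\textbf{The fix: you do not need the isomorphism.} Your density argument shows that the image of $\CP$ in $\omega^\infty/\CN_\Pi$ is the whole space of $\tK\cdot\tKp$-finite vectors $\pi\otimes\Theta^\infty(\Pi)^{\mathrm{HC}}$. Combined with $\CN_\pi\subseteq\CN_\Pi\cap\CP$, this gives a surjection $\Theta(\pi)\twoheadrightarrow\Theta^\infty(\Pi)^{\mathrm{HC}}$. Then $\theta^\infty(\Pi)^{\mathrm{HC}}$ is an irreducible quotient of $\Theta(\pi)$, so it equals $\theta(\pi)$ by uniqueness. Exactness of $(\cdot)_{\tKp}$ then gives
\[
1\le[\Theta^\infty(\Pi):\theta^\infty(\Pi)]=[\Theta^\infty(\Pi)^{\mathrm{HC}}:\theta(\pi)]\le[\Theta(\pi):\theta(\pi)]=1.
\]
This is exactly the paper's proof. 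Of your Step~1 it needs only the easy direction: a nonzero continuous map stays nonzero on the dense subspace $\CP$, so $\pi\in\CR(\g,\tK,\omega)$.
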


\begin{proof}
  Let $\pi$ be the underlying irreducible $(\g,\tK)$-module of $\Pi$. Since $\omega$ is dense in $\omega^\infty$, we have $\pi\in\CR(\g,\tK,\omega)$, and a natural surjective map
  \[
    \Theta(\pi)\twoheadrightarrow \Theta^\infty(\Pi)^{\mathrm{HC}}.
  \]
  Here and henceforth, we use $\Theta^\infty(\Pi)^{\mathrm{HC}}$ and $\theta^\infty(\Pi)^{\mathrm{HC}}$ to denote the underlying Harish--Chandra $(\g',\tKp)$-modules of $\Theta^\infty(\Pi)$ and $\theta^\infty(\Pi)$, respectively.

  By the uniqueness of the irreducible quotient of $\Theta(\pi)$, we have $\theta(\pi) = \theta^{\infty}(\Pi)^{\mathrm{HC}}$. According to Theorem~\ref{Thm: JH small theta}, we obtain
  \[
    1 \leq [\Theta^\infty(\Pi):\theta^\infty(\Pi)] = [\Theta^\infty(\Pi)^{\mathrm{HC}}:\theta^\infty(\Pi)^{\mathrm{HC}}] \leq [\Theta(\pi):\theta(\pi)] =1.
  \]
  Thus, the theorem follows.
\end{proof}

\section{A lowest degree $K$-type with multiplicity two}\label{Sec: mult two example}
In this section, we study an irreducible representation occurring in the theta correspondence which has a lowest degree $K$-type of multiplicity two.

Let $G=\Sp_4(\BR)$ and $G'=\RO(m,m)$, where $m \geq 4$. Then $(G,G')$ is a reductive dual pair in $\Sp_{8m}(\BR)$ in the stable range, with $G$ the smaller member. 
Since the orthogonal space is even-dimensional, theta correspondence in this case can be formulated directly in terms of representations of $G$ and $G'$.
Moreover, by~\cite[Theorem 1]{PP2008stablerange}, every irreducible representation of $G$ occurs in the theta correspondence.

Let $B$ be the Borel subgroup of $G$ consisting of upper triangular matrices, with Levi decomposition $B=AN$, where 
\[
A:= \left\{a(t_1,t_2) := \begin{pmatrix}
	t_1 & & &\\
	& t_2 & &\\
	& & t_2^{-1} &\\
	& & &t_1^{-1}
\end{pmatrix}\;\middle|\;t_1,t_2\in \BR^{\times} \right\} \simeq \BR^{\times}\times \BR^{\times}
\]
is the Levi subgroup consisting of diagonal matrices. Consider the normalized parabolic induction
\[
 \pi:= \Ind_B^G (\chi_1 \boxtimes \chi_2),
\]
where $\chi_i(t)=\sgn(t)|t|^{s_i}$ for $i=1,2$. Assume that
$s_1,s_2\in\sqrt{-1}\BR\setminus\{0\}$ and $s_1\neq\pm s_2$.

\begin{lemt}
	The representation $\pi$ is irreducible.
\end{lemt}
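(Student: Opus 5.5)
The irreducibility of $\pi=\Ind_B^G(\chi_1\boxtimes\chi_2)$ will follow from the standard reducibility criterion for principal series of split groups: $\Ind_B^G(\chi)$ is irreducible if and only if, for every root $\alpha$ of $G$, the restriction $\chi\circ\alpha^\vee$ to $\BR^\times$ is not of the form $t\mapsto \sgn(t)^{k+1}|t|^{k}$ or its inverse $t\mapsto\sgn(t)^{k+1}|t|^{-k}$ for an integer $k\geq1$. This is Speh--Vogan's criterion; for $\Sp_4(\BR)$ one can also take it from the explicit reducibility results for rank one, applied via Harish-Chandra's commuting algebra. The plan is to list the coroots of $\Sp_4$ and check that none of the characters $\chi\circ\alpha^\vee$ lies on a reducibility wall.

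The coroots of $\Sp_4$ in terms of $a(t_1,t_2)$ are, up to sign:
\begin{itemize}
\item for the long roots $2e_1,2e_2$: $t\mapsto a(t,1)$ and $t\mapsto a(1,t)$;
\item for the short roots $e_1\pm e_2$: $t\mapsto a(t,t^{\pm1})$.
\end{itemize}
This gives
\[
\chi\circ\alpha^\vee\in\{\chi_1,\ \chi_2,\ \chi_1\chi_2,\ \chi_1\chi_2^{-1}\}
\]
and their inverses, namely
\[
\sgn(t)|t|^{s_1},\quad \sgn(t)|t|^{s_2},\quad |t|^{s_1+s_2},\quad |t|^{s_1-s_2}.
\]
A reducibility point requires the exponent to be a nonzero integer $\pm k$ with $k\geq1$. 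Here the exponents are purely imaginary: $s_1,s_2\in\sqrt{-1}\BR$, so $s_1\pm s_2$ are also imaginary. Moreover $s_1\neq\pm s_2$ gives $s_1\pm s_2\neq0$, and together with $s_i\neq0$ this means none of the characters is trivial or equal to $\sgn$. Since a nonzero integer is never purely imaginary, no wall is hit, and $\pi$ is irreducible.

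An alternative avoiding the general criterion is to use that for unitary $\chi$, $\Ind_B^G\chi$ is unitary, so it decomposes as a direct sum of irreducibles. By Harish-Chandra/Knapp--Stein theory, its commuting algebra is spanned by normalized intertwining operators indexed by the subgroup $W_\chi=\{w\in W: w\chi=\chi\}$ (more precisely, by its $R$-group). The Weyl group of type $C_2$ acts on $(\chi_1,\chi_2)$ by permutations and inversions. Since $\chi_i^{-1}=\sgn|\cdot|^{-s_i}$, the hypotheses $s_i\neq0$ and $s_1\neq\pm s_2$ (and the fact that $\chi_1,\chi_2$ have the same sign part) force $w\chi=\chi$ only for $w=1$. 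Hence $\End(\pi)=\BC$, and unitarity then gives irreducibility.

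The main obstacle is citing a precise form of the criterion valid for real $\Sp_4$. The unitary-plus-trivial-stabilizer argument is cleanest, since Bruhat theory bounds $\dim\End(\pi)\leq|W_\chi|=1$ directly, so I would present that version.
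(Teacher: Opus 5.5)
Your preferred argument (show that the Weyl stabilizer $W_\chi$ of the unitary character $\chi_1\boxtimes\chi_2$ is trivial, then use Bruhat/Knapp--Stein to get $\End(\pi)=\BC$ and hence irreducibility) is essentially the paper's proof, which computes $W_{\chi_1,\chi_2}=\{1\}$ and cites Knapp--Stein's irreducibility theorem for unitary principal series. The ``if and only if'' criterion in your first route is misstated: it omits the case $\chi\circ\alpha^\vee=\sgn$ (for example, $\Ind_B^{\mathrm{SL}_2(\BR)}(\sgn)$ is reducible), but this does not affect your conclusion, since your check that no $\chi\circ\alpha^\vee$ is trivial or $\sgn$ puts you in the regular case where the correct Speh--Vogan statement applies.
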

\begin{proof}
	Let $W=N_G(A)/A$ be the Weyl group of $G$, it is generated by simple reflections $r_1,r_2$, which act on $A$ by
	\[
	r_1 \cdot a(t_1,t_2)= a(t_2,t_1), \qquad r_2\cdot a(t_1,t_2)=a(t_1,t_2^{-1}).
	\]
    For $w\in W$, choose a representative $\dot w\in N_G(A)$. For any character $\chi$ of $A$, define
    \[
    \chi^w(a):=\chi(\dot w^{-1}a\dot w),
    \qquad a\in A.
    \]
    Since $s_1\neq \pm s_2$ and $s_1, s_2 \neq 0$, we have
	\[
	W_{\chi_1,\chi_2}:= \{w\in W \mid (\chi_1\boxtimes \chi_2)^w \simeq \chi_1\boxtimes \chi_2\} =\{1\}.
	\] 
	Since $G$ has real rank two and $\chi_1\boxtimes\chi_2$ is unitary, it follows from~\cite[Theorem 3]{KS1972Irreducibility} that $\pi$ is irreducible.
\end{proof}

We now study the lowest degree $K$-types of $\pi$ and their multiplicities. 
Let $\theta(g):= g^{-t}$ be the Cartan involution of $G=\Sp_4(\BR)$ such that $A$ is $\theta$-stable, we have $K= G^{\theta}\simeq \RU(2)$ and 
\[
K \cap A = A^{\theta} \simeq\{\pm 1\}\times \{\pm 1\}.
\]
By the Iwasawa decomposition,
\[
 \pi|_K\simeq \Ind_{K\cap A}^K \left((\chi_1 \boxtimes \chi_2)|_{K\cap A}\right)\simeq \Ind_{\{\pm 1\}\times \{\pm 1\}}^{\RU(2)} (\sgn \boxtimes \sgn).
\]
Thus, using Frobenius reciprocity, for $\sigma\in \CR(K)$, we obtain
\[
\mathop{\Hom}\nolimits_K\left(\sigma, \Ind_{\{\pm 1\}\times \{\pm 1\}}^{\RU(2)} (\sgn \boxtimes \sgn)\right)\simeq \mathop{\Hom}\nolimits_{\{\pm 1\}\times \{\pm 1\}}(\sigma, \sgn\boxtimes\sgn).
\]

We parametrize $\CR(K)$ by highest weights $(\lambda_1,\lambda_2)$, where $\lambda_1,\lambda_2 \in \BZ$ and $\lambda_1\geq\lambda_2$.
By~\cite[Proposition 4]{Paul2005Theta}, the degree of $\sigma=(\lambda_1,\lambda_2)\in \CR(K)$ with respect to the dual pair $(G,G')$ is
\[
\degree \sigma = |\lambda_1|+|\lambda_2|.
\]
A direct computation shows that the lowest degree of $\pi$ is 2, with the corresponding lowest degree $K$-types and their multiplicities listed below:
\[
\begin{array}{c|ccccc}
\text{$K$-type $\sigma$} & (1,1) & (1,-1) & (2,0) &(-1,-1) & (0,-2)\\
\hline
\text{multiplicity} & 1 & 2 & 1 &1 & 1
\end{array}.
\]
In particular, the lowest degree $K$-type $(1,-1)$ occurs with multiplicity two. 
We also mention that, in this case, $(1,1)$ and $(-1,-1)$ are the minimal $K$-types of $\pi$ in Vogan's sense, which always occur with multiplicity one.

\section*{Acknowledgments}
ZG and KW thank Professor Xuhua He for his constant support.
ZG and KW are partially supported by the New Cornerstone Science Foundation through the New Cornerstone Investigator Program awarded to Professor Xuhua He. KW is also supported by the National Natural Science Foundation of China (Project No. 123B1004).

%The authors used ChatGPT for proof exploration and improving the exposition. The authors have verified all mathematical statements, proofs, and references and take full responsibility for the contents of the paper.

The main ideas, questions, and direction of this work were developed by the authors. In this process, ChatGPT contributed an initial argument for the equality of the multiplicities of the lowest degree $K$-types, which was reformulated and rewritten by the authors into its current form. 
The authors have verified all mathematical statements, proofs, and references and take full responsibility for the contents of the paper.

\printbibliography

@article {Howe1989TranscendingClassicalInvTheory,
    AUTHOR = {Howe, Roger},
     TITLE = {Transcending classical invariant theory},
   JOURNAL = {J. Amer. Math. Soc.},
  FJOURNAL = {Journal of the American Mathematical Society},
    VOLUME = {2},
      YEAR = {1989},
    NUMBER = {3},
     PAGES = {535--552},
      ISSN = {0894-0347,1088-6834},
   MRCLASS = {22E45},
  MRNUMBER = {985172},
       DOI = {10.2307/1990942},
       URL = {https://doi.org/10.2307/1990942},
}

@incollection {Adams2007ThetaR,
    AUTHOR = {Adams, Jeffrey},
     TITLE = {The theta correspondence over {$\mathbb R$}},
 BOOKTITLE = {Harmonic analysis, group representations, automorphic forms
              and invariant theory},
    SERIES = {Lect. Notes Ser. Inst. Math. Sci. Natl. Univ. Singap.},
    VOLUME = {12},
     PAGES = {1--39},
 PUBLISHER = {World Sci. Publ., Hackensack, NJ},
      YEAR = {2007},
      ISBN = {978-981-277-078-3},
   MRCLASS = {22E45 (11F27)},
  MRNUMBER = {2401808},
MRREVIEWER = {Hongyu\ L.\ He},
       DOI = {10.1142/9789812770790\_0001},
       URL = {https://doi.org/10.1142/9789812770790_0001},
}

@article {Paul2005Theta,
    AUTHOR = {Paul, Annegret},
     TITLE = {On the {H}owe correspondence for symplectic-orthogonal dual
              pairs},
   JOURNAL = {J. Funct. Anal.},
  FJOURNAL = {Journal of Functional Analysis},
    VOLUME = {228},
      YEAR = {2005},
    NUMBER = {2},
     PAGES = {270--310},
      ISSN = {0022-1236,1096-0783},
   MRCLASS = {20G05},
  MRNUMBER = {2175409},
MRREVIEWER = {Tomasz\ Przebinda},
       DOI = {10.1016/j.jfa.2005.03.015},
       URL = {https://doi.org/10.1016/j.jfa.2005.03.015},
}

@article {PP2008stablerange,
    AUTHOR = {Protsak, V. and Przebinda, T.},
     TITLE = {On the occurrence of admissible representations in the real
              {H}owe correspondence in stable range},
   JOURNAL = {Manuscripta Math.},
  FJOURNAL = {Manuscripta Mathematica},
    VOLUME = {126},
      YEAR = {2008},
    NUMBER = {2},
     PAGES = {135--141},
      ISSN = {0025-2611,1432-1785},
   MRCLASS = {22E45 (22E46)},
  MRNUMBER = {2403182},
MRREVIEWER = {Peter\ E.\ Trapa},
       DOI = {10.1007/s00229-007-0161-8},
       URL = {https://doi.org/10.1007/s00229-007-0161-8},
}

@incollection {KS1972Irreducibility,
    AUTHOR = {Knapp, A. W. and Stein, E. M.},
     TITLE = {Irreducibility theorems for the principal series},
 BOOKTITLE = {Conference on {H}armonic {A}nalysis ({U}niv. {M}aryland,
              {C}ollege {P}ark, {M}d., 1971)},
    SERIES = {Lecture Notes in Math.},
    VOLUME = {Vol. 266},
     PAGES = {197--214},
 PUBLISHER = {Springer, Berlin-New York},
      YEAR = {1972},
   MRCLASS = {22E45},
  MRNUMBER = {422512},
MRREVIEWER = {G.\ I.\ Ol\cprime shanski\u i},
}

\end{document}